\newtheorem{theorem}{Theorem}[section]
\newtheorem{proposition}[theorem]{Proposition}
\newtheorem{corollary}[theorem]{Corollary}
\newtheorem{definition}[theorem]{Definition}
\newtheorem{remark}[theorem]{Remark}
\newtheorem{lemma}[theorem]{Lemma}
\newtheorem{example}[theorem]{Example}
\newcommand{\des}{{\rm des}}
\newcommand{\ex}{{\rm ex}}
\newcommand{\link}{{\rm link}}
\newcommand{\nc}{{\rm NC}}
\newcommand{\sd}{{\rm sd}}
\newcommand{\dD}{{\mathcal D}}
\newcommand{\eE}{{\mathcal E}}
\newcommand{\jJ}{{\mathcal J}}
\newcommand{\sS}{{\mathcal S}}
\newcommand{\xX}{{\mathcal X}}
\renewcommand{\to}{\rightarrow}
\newcommand{\sm}{{\smallsetminus}}
\begin{document}
\title[The local $h$-vector of the cluster subdivision]
{The local $h$-vector of the cluster subdivision of a simplex}

\author{Christos~A.~Athanasiadis}
\author{Christina~Savvidou}

\address{Department of Mathematics
(Division of Algebra-Geometry)\\
University of Athens\\
Panepistimioupolis\\
15784 Athens, Greece}
\email{caath@math.uoa.gr, savvtina@math.uoa.gr}

\date{}
%
\begin{abstract}
The cluster complex $\Delta (\Phi)$ is an abstract simplicial complex,
introduced by Fomin and Zelevinsky for a finite root system $\Phi$.
The positive part of $\Delta (\Phi)$ naturally defines a simplicial
subdivision of the simplex on the vertex set of simple roots of $\Phi$.
The local $h$-vector of this subdivision, in the sense of Stanley, is
computed and the corresponding $\gamma$-vector is shown to be
nonnegative. Combinatorial interpretations to the entries of the local
$h$-vector and the corresponding $\gamma$-vector are provided for the
classical root systems, in terms of noncrossing partitions of types $A$
and $B$. An analogous result is given for the barycentric subdivision
of a simplex.

\bigskip
\noindent
\textit{Key words and phrases}. Local $h$-vector, barycentric
subdivision, cluster complex, cluster subdivision, $\gamma$-vector,
noncrossing partition.
\end{abstract}

\maketitle

\section{Introduction and results}
\label{intro}

Local $h$-vectors were introduced by Stanley \cite{Sta92} as a
fundamental tool in his theory of face enumeration for
subdivisions of simplicial complexes. Given a (finite,
topological) simplicial subdivision $\Gamma$ of the abstract
simplex $2^V$ on an $n$-element vertex set $V$, the local
$h$-polynomial $\ell_V (\Gamma, x)$ is defined as an alternating
sum of the $h$-polynomials of the restrictions of $\Gamma$ to the
faces of $2^V$ (see Section~\ref{sec:back} for all relevant
definitions). The local $h$-vector of $\Gamma$ is the sequence of
coefficients $\ell_V (\Gamma) = (\ell_0, \ell_1,\dots,\ell_n)$,
where $\ell_V (\Gamma, x) = \ell_0 + \ell_1 x + \cdots + \ell_n
x^n$.

The importance of local $h$-vectors stems from their appearance in the
locality formula \cite[Theorem~3.2]{Sta92}, which expresses the $h$-polynomial
of a simplicial subdivision of a pure simplicial complex $\Delta$ as a sum
of local contributions, one for each face of $\Delta$. Several fundamental
properties of local $h$-vectors, including symmetry for all topological
subdivisions, nonnegativity for quasi-geometric subdivisions and
unimodality for regular (geometric) subdivisions, were proven in
\cite{Sta92}.

The local $h$-vector of the barycentric subdivision of a simplex affords
an elegant interpretation \cite[Proposition~2.4]{Sta92} in terms of the
combinatorics of permutations. The focus of this paper is on another
example of subdivision of the simplex with remarkable combinatorial
properties, termed as the \textit{cluster subdivision}. This is the
simplicial subdivision of the simplex on the vertex set of simple roots
of a finite root system
$\Phi$ which is naturally defined by the positive part of the cluster
complex $\Delta(\Phi)$ \cite{FZ03} (see the discussion below). Our main
results compute the local $h$-vector of the cluster subdivision, providing
combinatorial interpretations for the classical root systems in terms of
the combinatorics of noncrossing partitions (for the deep connections
between cluster combinatorics and noncrossing partitions see, for
instance, \cite{ABMW06, Ch04, Re07}).

Before proceeding further, we recall the following notation and
terminology from \cite{Ath12}. Let $\Gamma$ be a simplicial
subdivision of an $(n-1)$-dimensional simplex $2^V$. Since $\ell_V
(\Gamma, x)$ has symmetric coefficients, there exists
\cite[Proposition~2.1.1]{Ga05} a unique polynomial $\xi_V (\Gamma,
x) = \xi_0 + \xi_1 x + \cdots + \xi_{\lfloor n/2 \rfloor}
x^{\lfloor n/2 \rfloor}$ such that
  $$ \ell_V (\Gamma, x) \, = \, \sum_{i=0}^{\lfloor n/2 \rfloor} \, \xi_i
     x^i (1+x)^{n-2i}. $$
Following \cite[Section~5]{Ath12}, we will refer to $\xi_V (\Gamma, x)$
as the \emph{local $\gamma$-polynomial} of $\Gamma$ (with respect to $V$)
and to the sequence $\xi_V (\Gamma) = (\xi_0, \xi_1,\dots,\xi_{\lfloor
n/2 \rfloor})$ as the \emph{local $\gamma$-vector} of $\Gamma$ (with
respect to $V$). As explained in \cite{Ath12}, these concepts play a
role in the theory of face enumeration for flag homology spheres and their
flag simplicial subdivisions.

We will show that $\xi_V (\Gamma, x)$ has nonnegative coefficients for all
cluster subdivisions by providing combinatorial interpretations in terms of 
noncrossing partitions, or by explicit computation. Since cluster 
subdivisions are geometric and flag, this result provides evidence for a 
conjecture by the first author \cite[Conjecture~5.4]{Ath12}, stating that 
$\xi_V (\Gamma, x)$ has nonnegative coefficients for a family of simplicial 
subdivisions of the simplex which includes all flag geometric subdivisions. 
We will also provide combinatorial interpretations to the coefficients of 
$\xi_V (\Gamma, x)$ for the barycentric subdivision of the simplex.

The remainder of this section states the main results of this paper
in more precise form. Their proofs are given in Sections~\ref{sec:cproof}
and \ref{sec:bproof}, after some of the relevant background on simplicial
complexes, cluster complexes, simplicial subdivisions and noncrossing
partitions is recalled in Section~\ref{sec:back}. Remarks and related
open problems are included in Section~\ref{sec:rem}.

\subsection{Cluster subdivisions} \label{subsec:cs}
Let $\Phi$ be a finite root system of rank $n$, equipped with a
positive system $\Phi^+$ and corresponding simple system $\Pi = \{
\alpha_i: i \in I\}$, where $I$ is an $n$-element index set. The
cluster complex $\Delta (\Phi)$ was introduced by Fomin and
Zelevinsky in the context of algebraic $Y$-systems \cite{FZ03}. It
is an abstract simplicial complex on the vertex set $\Phi^+ \cup
(-\Pi)$, consisting of the positive roots and the negative simple
roots, which is homeomorphic to the $(n-1)$-dimensional sphere. When
$\Phi$ is crystallographic, the combinatorics of $\Delta (\Phi)$
encodes the exchange of clusters in the corresponding cluster
algebra of finite type \cite{FZ02}. An overview of cluster complexes
and their connection to cluster algebras can be found in
\cite{FR07}. The restriction $\Delta_+ (\Phi)$ of $\Delta (\Phi)$ on
the vertex set $\Phi^+$, known as the positive part of $\Delta
(\Phi)$, is homeomorphic to the $(n-1)$-dimensional ball.

The complex $\Delta_+ (\Phi)$ has the structure of a (geometric)
simplicial subdivision of the simplex $2^\Pi$ on the vertex set
$\Pi$ (see Section~\ref{subsec:cluster}). The restriction of this
subdivision to the face $\{\alpha_i: i \in J\}$ of $2^\Pi$ indexed
by $J \subseteq I$ is the complex $\Delta_+ (\Phi_J)$, where
$\Phi_J$ is the standard parabolic root subsystem of $\Phi$
corresponding to $J$ (so that $\Phi_I = \Phi$). We will refer to
this subdivision as the cluster subdivision associated to $\Phi$
and will denote it by $\Gamma(\Phi)$. We will write
  \begin{equation} \label{eq:defclocalh}
    \ell_I (\Gamma (\Phi), x) \ = \ \sum_{i=0}^n \, \ell_i (\Phi) x^i
  \end{equation}
for the local $h$-polynomial of $\Gamma(\Phi)$ and $\ell_I (\Phi)
= (\ell_0 (\Phi), \ell_1 (\Phi),\dots,\ell_n (\Phi))$ for the
corresponding local $h$-vector. The relevant definitions lead (see
Section~\ref{subsec:cluster}) to the formula
  \begin{equation} \label{eq:clocalh}
    \ell_I (\Gamma (\Phi), x) \ = \ \sum_{J \subseteq I} \ (-1)^{|I \sm J|} \
    h (\Delta_+ (\Phi_J), x),
  \end{equation}
where $h (\Delta_+ (\Phi_J), x)$ is the $h$-polynomial of $\Delta_+ (\Phi_J)$.
The results of \cite{Sta92}, mentioned earlier, imply that $\ell_I (\Gamma
(\Phi), x)$ has nonnegative and symmetric coefficients for every root system
$\Phi$.

The $h$-polynomial of $\Delta_+ (\Phi)$ admits several combinatorial
interpretations \cite[Corollary~1.4 and Theorem~1.5]{AT06} \cite[Corollaries~7.4
and 7.5]{ABMW06} in terms of order ideals of roots, hyperplane regions,
Weyl group orbits on a finite torus, lattice points and noncrossing partitions.
It was computed explicitly for all irreducible (crystallographic) root systems
in \cite[Section~6]{AT06}. We denote by $\nc^A (n)$ and $\nc^B (n)$ the set of
noncrossing partitions of the set $\{1, 2,\dots,n\}$ and that of
$B_n$-noncrossing partitions, respectively, and refer to Section~\ref{subsec:nc}
for the relevant background and any undefined terminology. Our first result
determines the local $h$-polynomial of $\Gamma (\Phi)$ as follows.

\begin{theorem} \label{thm:clocalh}
Let $\ell_I (\Gamma (\Phi), x) = \sum_{i=0}^n \ell_i (\Phi) x^i$ be the local
$h$-polynomial of the cluster subdivision $\Gamma (\Phi)$, associated to an
irreducible root system $\Phi$ of rank $n$ and Cartan-Killing type $\xX$. Then
$\ell_i (\Phi)$ is equal to:
 \begin{itemize}
   \item[$\bullet$] the number of partitions $\pi \in \nc^A (n)$ with $i$ blocks,
                    such that every singleton block of $\pi$ is nested, if $\xX =
                    A_n$,
   \item[$\bullet$] the number of partitions $\pi \in \nc^B (n)$ with no zero
                    block and $i$ pairs $\{B, -B\}$ of nonzero blocks, such that
                    every positive singleton block of $\pi$ is nested, if
                    $\xX = B_n$,
   \item[$\bullet$] $n-2$ times the number of partitions $\pi \in \nc^A (n-1)$
                    with $i$ blocks, if $\xX = D_n$.
  \end{itemize}
Moreover, $\ell_I (\Gamma (\Phi), x)$ is equal to
 $$ \begin{cases}
    (m-2)x, & \text{if \ $\xX = I_2(m)$} \\
    8x + 8x^2, & \text{if \ $\xX = H_3$} \\
    42x + 124x^2 + 42x^3, & \text{if \ $\xX = H_4$} \\
    10x + 29x^2 + 10x^3, & \text{if \ $\xX = F_4$} \\
    7x + 63x^2 + 125x^3 + 63x^4 + 7x^5, & \text{if \ $\xX = E_6$} \\
    16x + 204x^2 + 644x^3 + 644x^4 + 204x^5 + 16x^6, & \text{if \ $\xX = E_7$} \\
    44x + 748x^2 + 3380x^3 + 5472x^4 + 3380x^5 + 748x^6 + 44x^7, & \text{if \
    $\xX = E_8$}.  \end{cases} $$
\end{theorem}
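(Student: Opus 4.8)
The plan is to compute $\ell_I(\Gamma(\Phi),x)$ directly from the inclusion–exclusion formula \eqref{eq:clocalh}, using the known $h$-polynomials of the positive cluster complexes $\Delta_+(\Phi_J)$ for standard parabolic subsystems $\Phi_J$, and then to recognize the resulting polynomials combinatorially. The key input is that for an irreducible root system of classical type, the parabolic subsystems decompose as products of irreducible classical root systems, and the $h$-polynomial of a join of complexes is the product of their $h$-polynomials; combined with the explicit formulas for $h(\Delta_+(\Phi),x)$ from \cite[Section~6]{AT06} (and the product formula over reducible $\Phi$), equation \eqref{eq:clocalh} becomes an explicit, if intricate, alternating sum over subsets $J\subseteq I$ organized by the isomorphism type of $\Phi_J$.

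For the exceptional and dihedral types, I would carry this out as a finite computation: enumerate the standard parabolic subsystems by type (equivalently, the order ideals, up to the relevant symmetry, in the poset of the associated Coxeter diagram with vertices removed), tabulate $h(\Delta_+(\Phi_J),x)$, and sum with signs. This produces the listed polynomials for $I_2(m)$, $H_3$, $H_4$, $F_4$, $E_6$, $E_7$, $E_8$ directly. The only subtlety here is bookkeeping: one must count parabolic subsystems of each diagram-type correctly, and it is worth double-checking via the known values $h(\Delta_+(\Phi),1) = $ the number of clusters in the positive part (the positive $W$-Catalan number) and the symmetry $\ell_i(\Phi)=\ell_{n-i}(\Phi)$ guaranteed by \cite{Sta92}, as well as $\ell_0(\Phi)=\ell_n(\Phi)=0$ for $n\ge 1$ (which follows since $\Delta_+(\Phi)$ has no interior vertex of full support when $n\ge 2$, and trivially for the rank-one pieces).

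For the classical types, the heart of the matter is to convert the alternating sum into the stated noncrossing-partition count. In type $A_n$ the positive cluster complex $h$-polynomial is (up to normalization) a Narayana-type polynomial, and the parabolic subsystems of $A_n$ are products $A_{j_1}\times\cdots\times A_{j_k}$ indexed by the ways of deleting nodes from a path; the inclusion–exclusion over such subsets is exactly the mechanism that, on the level of noncrossing partitions of $\{1,\dots,n\}$, strips away the non-nested singletons. Concretely, I would set up a sign-reversing involution or a generating-function identity showing that
$$ \sum_{J\subseteq I} (-1)^{|I\sm J|}\, h(\Delta_+(\Phi_J),x) \ = \ \sum_{\pi} x^{\,\mathrm{bk}(\pi)}, $$
where the right-hand sum is over $\pi\in\nc^A(n)$ all of whose singleton blocks are nested (and $\mathrm{bk}(\pi)$ is the number of blocks); the type $B_n$ case is parallel, using $B_n$-noncrossing partitions and the type-$B$ Narayana numbers, with the extra "no zero block" condition reflecting that we work with the \emph{positive} part. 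The type $D_n$ case is slightly different because the Coxeter diagram branches: a standard parabolic of $D_n$ obtained by deleting an end node near the fork can be of type $A$, and the alternating sum collapses, after cancellation of all the genuinely $D$-type contributions, to $(n-2)$ copies of the full $\nc^A(n-1)$ Narayana distribution — I would prove this by isolating the two branch nodes and showing the sum over subsets containing neither, exactly one, or both of them telescopes.

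The main obstacle I anticipate is the classical-type identification, specifically producing a clean bijective or involutive proof that the signed sum over parabolics equals the "nested singletons" noncrossing-partition statistic, rather than merely checking equality of generating functions. Getting the $B_n$ bookkeeping right — tracking zero blocks, pairs $\{B,-B\}$, and the positivity/nesting condition simultaneously under the parabolic inclusion–exclusion — is where the argument is most delicate; I would likely first establish the type-$A$ statement as a model and then adapt it, using the standard embedding of $\nc^B(n)$ combinatorics and the known factorization of type-$B$ parabolics into a type-$B$ factor times type-$A$ factors.
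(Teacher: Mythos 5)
Your proposal follows essentially the same route as the paper: it computes the alternating sum \eqref{eq:clocalh} using the explicit $h$-polynomials of $\Delta_+(\Phi_J)$ from \cite{AT06} together with the product rule over the irreducible factors of each parabolic, identifies each term in types $A_n$ and $B_n$ with the noncrossing partitions having prescribed nonnested singletons so that inclusion--exclusion leaves exactly the partitions with all singletons nested, handles $D_n$ by an algebraic collapse of the signed sum (the paper organizes it by the smallest index missing from $J$ and verifies the resulting recursion via the Narayana generating function, rather than by the branch nodes, but this is a difference of bookkeeping, not of method), and treats the exceptional and dihedral types by finite computation. The approach is sound and matches the paper's in all essentials.
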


\bigskip
We will write $\xi_I (\Phi) = (\xi_0 (\Phi), \xi_1 (\Phi),\dots,\xi_{\lfloor n/2
\rfloor} (\Phi))$ for the local $\gamma$-vector of $\Gamma(\Phi)$, so that
  \begin{equation} \label{eq:defclocalg}
    \ell_I (\Gamma (\Phi), x) \ = \ \sum_{i=0}^{\lfloor n/2 \rfloor} \, \xi_i
    (\Phi) \, x^i (1+x)^{n-2i}.
  \end{equation}

Our second result computes the numbers $\xi_i (\Phi)$ (hence, via
equation~(\ref{eq:defclocalg}), the numbers $\ell_i (\Phi)$ as
well) explicitly.

\begin{theorem} \label{thm:clocalg}
Let $\Phi$ be an irreducible root system of rank $n$ and Cartan-Killing type $\xX$
and let $\xi_i (\Phi)$ be the integers uniquely defined by (\ref{eq:defclocalg}).
Then $\xi_0 (\Phi) = 0$ and
  $$ \xi_i (\Phi) \ = \ \begin{cases}
    \displaystyle \frac{1}{n-i+1} \binom{n}{i} \binom{n-i-1}{i-1},
    & \text{if \ $\xX = A_n$} \\ & \\
    \displaystyle \binom{n}{i} \binom{n-i-1}{i-1}, & \text{if \ $\xX = B_n$} \\
    & \\ \displaystyle \frac{n-2}{i} \binom{2i-2}{i-1} \binom{n-2}{2i-2},
    & \text{if \ $\xX = D_n$}  \end{cases} $$

\bigskip
\noindent
for $1 \le i \le \lfloor n/2 \rfloor$. Moreover,

 $$ \sum_{i=0}^{\lfloor n/2 \rfloor} \, \xi_i (\Phi) x^i \ = \ \begin{cases}
    (m-2)x, & \text{if \ $\xX = I_2(m)$} \\
    8x, & \text{if \ $\xX = H_3$} \\
    42x + 40x^2, & \text{if \ $\xX = H_4$} \\
    10x + 9x^2, & \text{if \ $\xX = F_4$} \\
    7x + 35x^2+ 13x^3, & \text{if \ $\xX = E_6$} \\
    16x + 124x^2 + 112x^3, & \text{if \ $\xX = E_7$} \\
    44x + 484x^2 + 784x^3 + 120x^4, & \text{if \ $\xX = E_8$}.  \end{cases} $$
\end{theorem}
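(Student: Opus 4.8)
The plan is to prove Theorem~\ref{thm:clocalg} in two stages: first establish the explicit formulas for the classical types $A_n$, $B_n$, $D_n$, and then dispose of the exceptional and dihedral types by direct computation. The essential input is Theorem~\ref{thm:clocalh}, which gives combinatorial interpretations of the $\ell_i(\Phi)$ for the classical types and explicit polynomials for the exceptional types.

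For the classical types, I would proceed as follows. Having $\ell_I(\Gamma(\Phi),x)$ in hand from Theorem~\ref{thm:clocalh}, the task is to extract the local $\gamma$-vector, i.e.\ to find the $\xi_i(\Phi)$ satisfying~(\ref{eq:defclocalg}). One efficient route is to guess the closed form for $\xi_i(\Phi)$ (as stated) and then verify the identity
  $$ \sum_{i=0}^{n} \ell_i(\Phi) x^i \ = \ \sum_{i=0}^{\lfloor n/2 \rfloor} \xi_i(\Phi) \, x^i (1+x)^{n-2i} $$
by comparing coefficients; this reduces to a finite family of binomial-coefficient identities, one for each $i$, which can be checked by standard hypergeometric manipulation (or by the snake-oil method, or by citing known expansions of Narayana-type and Catalan-type polynomials). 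For type $A_n$, the polynomial $\sum_i \ell_i(\Phi) x^i$ is a variant of the Narayana polynomial, and its $(1+x)$-expansion coefficients are well documented; for type $B_n$ the relevant polynomial is the type-$B$ Narayana polynomial $\sum_i \binom{n}{i}^2 x^i$ up to the ``nested singleton'' correction, and for type $D_n$ the factor $n-2$ pulls out and one is left expanding $\sum_i N(n-1,i) x^i$ where $N$ denotes Narayana numbers. Alternatively — and this is the cleaner approach if one wants the $\gamma$-nonnegativity to be manifest — one can give a combinatorial proof: exhibit, for each fixed $i$, a bijection between the noncrossing partitions counted by $\xi_i(\Phi)$ and a set of ``doubly-nested-free'' noncrossing partitions on which an involution (toggling nested singletons, or their type-$B$ analogue) acts, the fixed points of which are counted by $\xi_i(\Phi)$ and the free orbits of which account for the $(1+x)$-factors; this is the mechanism already implicit in Theorem~\ref{thm:clocalh}. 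The identity $\xi_0(\Phi)=0$ is immediate since $\ell_0(\Phi)=0$ (the subdivision $\Gamma(\Phi)$ is proper, i.e.\ has no interior vertices other than those coming from the restriction, so the constant term of the local $h$-polynomial vanishes — indeed $\ell_0=0$ for every subdivision of a simplex of positive dimension) together with the fact that $\xi_0$ equals $\ell_0$.

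For the exceptional types $I_2(m), H_3, H_4, F_4, E_6, E_7, E_8$, the local $h$-polynomials are given explicitly in Theorem~\ref{thm:clocalh}, so computing the $\xi_i(\Phi)$ is purely a matter of inverting the linear system~(\ref{eq:defclocalg}): write each monomial $x^i(1+x)^{n-2i}$ out and solve for the $\xi_i$ in terms of the $\ell_i$ by back-substitution, starting from the middle coefficient. This is a routine finite computation for each type (for $n \le 8$ there are at most five unknowns), and I would simply record the results; as a sanity check, one verifies that the resulting $\xi_i(\Phi)$ are nonnegative integers, which confirms the $\gamma$-nonnegativity assertion for these types and thereby completes the evidence for \cite[Conjecture~5.4]{Ath12} in the cluster case.

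The main obstacle is the verification of the binomial identities for types $B_n$ and $D_n$. The type-$D$ case is the most delicate because of the linear factor $n-2$ and because the formula $\frac{n-2}{i}\binom{2i-2}{i-1}\binom{n-2}{2i-2}$ has a somewhat unexpected shape; I expect the cleanest derivation there is to first reduce, via Theorem~\ref{thm:clocalh}, to the statement that the ordinary Narayana polynomial $\sum_{i\ge 1} N(n-1,i) x^i$ expands in the basis $\{x^i(1+x)^{(n-1)-2i}\}$ (note the shift to $n-1$) with coefficients $\frac{1}{i}\binom{2i-2}{i-1}\binom{n-2}{2i-2}$ for $i \ge 1$ — a classical fact about the $\gamma$-vector of the boundary of the cyclic polytope / associahedron — and then multiply through by $n-2$ while being careful that the parity shift from $n-1$ to $n$ does not change $\lfloor (n-1)/2\rfloor$ versus $\lfloor n/2\rfloor$ in a way that affects the claimed range $1\le i\le\lfloor n/2\rfloor$. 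For type $B_n$ the corresponding classical fact is that $\sum_i \binom{n}{i}^2 x^i$ has $\gamma$-coefficients $\binom{n}{i}\binom{n-i-1}{i-1}$; one must then account for the ``nested positive singleton'' restriction in Theorem~\ref{thm:clocalh}, which I expect contributes exactly the correction turning $\binom{n}{i}^2$-type counts into the stated ones and is best handled by the same toggle involution used to prove Theorem~\ref{thm:clocalh}.
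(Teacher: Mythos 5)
Your proposal is correct and follows essentially the same route as the paper: the authors also pass from the combinatorial interpretations of $\ell_i(\Phi)$ to the $\xi_i(\Phi)$ via a toggle-type equivalence relation on nested singletons in types $A_n$ and $B_n$ (Propositions~\ref{prop:An} and \ref{prop:Bn}), invoke the known $\gamma$-expansion of the Narayana polynomial $C_{n-1}(x)$ from \cite{PRW08} for type $D_n$ (Proposition~\ref{prop:Dn}), and handle the dihedral and exceptional types by finite back-substitution (with a small shortcut for $H_4$ using the number of interior vertices and the count of positive clusters). The only structural difference is that the paper proves Theorems~\ref{thm:clocalh} and \ref{thm:clocalg} simultaneously in those propositions, so the heavy lifting there is the computation of the local $h$-polynomials themselves, which you take as given.
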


\medskip
The proof of Theorem~\ref{thm:clocalg}, given in Section~\ref{sec:cproof},
shows that when $\xX = A_n$ (respectively, $\xX = B_n$), the numbers $\xi_i 
(\Phi)$ enumerate partitions $\pi \in \nc^A (n)$ (respectively, partitions $\pi 
\in \nc^B (n)$ with no zero block) which have no singleton block, by the number 
of blocks; see Propositions~\ref{prop:An} and \ref{prop:Bn}.

When $\Phi$ is crystallographic, the cluster complex $\Delta(\Phi)$ can be
realized as the boundary complex of a simplicial convex polytope \cite{CFZ02}.
One may deduce from this statement that $\Gamma(\Phi)$ is a regular (geometric)
subdivision of the simplex $2^\Pi$. Thus \cite[Theorem~5.2]{Sta92} implies that
the local $h$-vector of $\Gamma(\Phi)$ is unimodal, i.e., $\ell_0 (\Phi) \le
\ell_1 (\Phi) \le \cdots \le \ell_{\lfloor n/2 \rfloor} (\Phi)$. The following
corollary of Theorem~\ref{thm:clocalg} provides a stronger statement.

\begin{corollary} \label{cor:clocalg}
For every root system $\Phi$ the local $\gamma$-vector of $\Gamma(\Phi)$ is
nonnegative, i.e., we have $\xi_i (\Phi) \ge 0$ for every index $i$.
\end{corollary}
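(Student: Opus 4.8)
\noindent
\emph{Proof proposal.} The plan is to derive the corollary directly from
Theorem~\ref{thm:clocalg}, after a preliminary reduction to the case of
irreducible $\Phi$. For the reduction I would first note that if $\Phi = \Phi_1
\times \Phi_2$ is an orthogonal direct sum, with simple systems indexed by
$I_1$ and $I_2$, then $\Delta_+ (\Phi)$ is the simplicial join $\Delta_+
(\Phi_1) * \Delta_+ (\Phi_2)$ and every standard parabolic subsystem
$\Phi_J$ factors accordingly, $\Phi_J = (\Phi_1)_{J_1} \times (\Phi_2)_{J_2}$
with $J = J_1 \sqcup J_2$. Since the $h$-polynomial of a join is the product of
the $h$-polynomials of the factors, formula~(\ref{eq:clocalh}) splits as a
product of the two sums and gives
  $$ \ell_I (\Gamma (\Phi), x) \ = \ \ell_{I_1} (\Gamma (\Phi_1), x) \cdot
     \ell_{I_2} (\Gamma (\Phi_2), x). $$
Substituting the expansion~(\ref{eq:defclocalg}) into each factor and comparing
coefficients of $x^k (1+x)^{n-2k}$ yields $\xi_k (\Phi) = \sum_{i+j=k} \xi_i
(\Phi_1) \, \xi_j (\Phi_2)$, so nonnegativity of the local $\gamma$-vector is
preserved under products and it suffices to treat irreducible $\Phi$.

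For irreducible $\Phi$ I would simply read off the explicit formulas of
Theorem~\ref{thm:clocalg}. In the exceptional cases $I_2 (m)$, $H_3$, $H_4$,
$F_4$, $E_6$, $E_7$, $E_8$ the polynomial $\sum_i \xi_i (\Phi) x^i$ is displayed
there with manifestly nonnegative coefficients, so there is nothing to check.
In type $A_n$ we have $\xi_i (\Phi) = \frac{1}{n-i+1} \binom{n}{i}
\binom{n-i-1}{i-1}$ for $1 \le i \le \lfloor n/2 \rfloor$: here $n-i+1 \ge 1 >
0$, the factor $\binom{n}{i}$ is a nonnegative integer, and $\binom{n-i-1}{i-1}
\ge 0$ because $i \ge 1$ and $n - i - 1 \ge i - 1 \ge 0$, the first inequality
being equivalent to $n \ge 2i$, which holds since $i \le \lfloor n/2 \rfloor$.
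The same bound $n \ge 2i$ handles type $B_n$, where $\xi_i (\Phi) = \binom{n}{i}
\binom{n-i-1}{i-1}$. In type $D_n$ (with $n \ge 4$) we have $\xi_i (\Phi) =
\frac{n-2}{i} \binom{2i-2}{i-1} \binom{n-2}{2i-2}$: here $n-2 \ge 2 > 0$, $i \ge
1$, the factor $\binom{2i-2}{i-1}$ is a central binomial coefficient, and
$\binom{n-2}{2i-2} \ge 0$ again because $n \ge 2i$ forces $n - 2 \ge 2i - 2 \ge
0$. Finally $\xi_0 (\Phi) = 0$ in every case by the theorem, which completes the
argument.

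I would also point out that in types $A_n$ and $B_n$ one can bypass the
formulas entirely: by the proof of Theorem~\ref{thm:clocalg}
(Propositions~\ref{prop:An} and~\ref{prop:Bn}), $\xi_i (\Phi)$ counts the
partitions in $\nc^A (n)$, respectively the partitions in $\nc^B (n)$ with no
zero block, that have no singleton block and exactly $i$ blocks (respectively
$i$ pairs of nonzero blocks), and such a count is nonnegative by definition.
Since the corollary is an immediate consequence of Theorem~\ref{thm:clocalg}
once the product reduction is in place, I do not anticipate any genuine
obstacle; the only point that needs a moment's care is verifying that the
denominators $n-i+1$ and $i$ and the entry $n-2$ are strictly positive, and
that the binomial coefficients have nonnegative upper arguments, throughout the
stated ranges — which the elementary inequality $i \le \lfloor n/2 \rfloor$
takes care of.
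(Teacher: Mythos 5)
Your proposal is correct and follows essentially the same route as the paper: the paper's proof reduces to the irreducible case via Lemma~\ref{lem:localjoin} (multiplicativity of the local $h$- and $\gamma$-polynomials under joins) and Lemma~\ref{lem:cjoin} ($\Gamma(\Phi_1\times\Phi_2)=\Gamma(\Phi_1)\ast\Gamma(\Phi_2)$), and then invokes Theorem~\ref{thm:clocalg}; you simply re-derive that multiplicativity inline from (\ref{eq:clocalh}) and spell out the (correct) verification that the explicit formulas are nonnegative on the stated ranges.
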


\subsection{Barycentric subdivisions} \label{subsec:bs}
Let $V$ be an $n$-element set. We denote by $\sd(2^V)$ the (first) barycentric
subdivision of the simplex $2^V$ and by $\sS_n$ the set of permutations of $\{1,
2,\dots,n\}$. We recall that for $w \in \sS_n$, a descent of $w$ is an index $1
\le i \le n-1$ such that $w(i) > w(i+1)$; an excedance of $w$ is an index $1 \le
i \le n$ such that $w(i) > i$. The local $h$-polynomial of $\sd(2^V)$ was
computed in \cite[Proposition~2.4]{Sta92} as
  \begin{equation} \label{eq:localbary}
    \ell_V (\sd(2^V), x) \ = \ \sum_{w \in \dD_n} \, x^{\ex(w)},
  \end{equation}
where $\dD_n$ is the set of derangements (permutations with no fixed points) in
$\sS_n$ and $\ex(w)$ is the number of excedances of $w \in \sS_n$. We will
provide similar combinatorial interpretations to the local $\gamma$-polynomial
of $\sd(2^V)$ after we introduce some more terminology.

For $w \in \sS_n$, an \emph{ascending run} (or simply, a run) of $w$ is a maximal
string $\{i, i+1,\dots,j\}$ of integers, such that $w(i) < w(i+1) < \cdots < w(j)$.
A \emph{double descent} of $w$ is an index $2 \le i \le n-1$ such that $w(i-1) >
w(i) > w(i+1)$; a \emph{double excedance} of $w$ is an index $1 \le i \le n$ such
that $w(i) > i > w^{-1}(i)$. A \emph{left to right maximum} of $w$ is an index $1
\le j \le n$ such that $w(i) < w(j)$ for all $1 \le i < j$.
\begin{theorem} \label{thm:bary}
  Let $(\xi_0, \xi_1,\dots,\xi_{\lfloor n/2 \rfloor})$ be the local $\gamma$-vector
  of the barycentric subdivision $\sd(2^V)$ of the $(n-1)$-dimensional simplex $2^V$.
  Then $\xi_i$ is equal to each of the following:
  \medskip
    \begin{itemize}
      \item[(i)] the number of permutations $w \in \sS_n$ with $i$ runs and no run
      of length one,
      \item[(ii)] the number of derangements $w \in \dD_n$ with $i$ excedances and
      no double excedance,
      \item[(iii)] the number of permutations $w \in \sS_n$ with $i$ descents and
      no double descent, such that every left to right maximum of $w$ is a descent.
  \end{itemize}
In particular, we have $\xi_i \ge 0$ for all $0 \le i \le \lfloor n/2 \rfloor$.
\end{theorem}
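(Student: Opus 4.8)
The starting point is Stanley's formula~(\ref{eq:localbary}), which identifies the local $h$-polynomial $\ell_V(\sd(2^V),x)$ with the derangement polynomial $d_n(x)=\sum_{w\in\dD_n}x^{\ex(w)}$. Since the three statements (i), (ii), (iii) all claim to count the same coefficients $\xi_i$, the cleanest route is to fix one combinatorial model and prove directly that its generating polynomial $\sum_i c_i x^i$ satisfies $d_n(x)=\sum_i c_i\,x^i(1+x)^{n-2i}$; by uniqueness of the $\gamma$-expansion of a symmetric polynomial, this forces $c_i=\xi_i$. Then one shows the other two models are equinumerous by explicit bijections. The nonnegativity $\xi_i\ge 0$ is immediate once any one model is established, since each $c_i$ is manifestly a cardinality.

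The plan is to take model (i) — permutations $w\in\sS_n$ with $i$ ascending runs and no run of length one — as the anchor. The key identity to prove is
$$ \sum_{w\in\dD_n} x^{\ex(w)} \ = \ \sum_{i=0}^{\lfloor n/2\rfloor} \#\{w\in\sS_n:\ w\text{ has }i\text{ runs, no run of length }1\}\ x^i(1+x)^{n-2i}. $$
I would establish this via a "run-insertion" argument: starting from a permutation $u$ with $i$ runs all of length $\ge 2$, one builds permutations with more runs by inserting each of the remaining elements either as a new singleton run or by splitting an existing run, and the bookkeeping of how a singleton-run insertion changes the descent statistic produces exactly the factor $(1+x)$ per inserted element while preserving the exponent count in the appropriate way. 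Concretely, it is cleaner to work on the descent side: a run of length $\ell$ contributes, and the number of runs of $w$ equals $\des(w)+1$, so "no run of length one" translates to "no double descent and $w(1)$ is not a descent-top in a degenerate way"; matching this with the known $\gamma$-expansion of the descent (Eulerian) polynomial via the standard valley-hopping / Foata–Strehl action, restricted to derangements, yields the formula. Alternatively, one can cite the known fact that the derangement polynomial is $\gamma$-positive with $\gamma$-coefficients counting exactly the permutations in (i) — this is essentially a result of Shareshian–Wachs / Brändén — and then the theorem reduces to two bijections.

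For the equivalence of (i) and (ii): the classical fundamental (Foata) bijection $\sS_n\to\sS_n$ sends the descent composition to the cycle structure in a way that converts "runs" into "excedance-related" data; I would use the variant that carries runs of length one precisely to double excedances, so that "no run of length one" becomes "no double excedance", and simultaneously sends non-derangement obstructions to fixed points, landing in $\dD_n$ with $\ex(w)=i$. For the equivalence of (i) and (iii): apply the inverse map $w\mapsto w^{-1}$ together with a reversal/complement symmetry of $\sS_n$ to interchange the roles of runs and descents; tracking left-to-right maxima under these operations, the condition "every run has length $\ge 2$" becomes "every left-to-right maximum is a descent" and "no run of length one" on the other side becomes "no double descent". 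The main obstacle is getting the three sets of side conditions — run lengths $\ge 2$, absence of double excedances, the left-to-right-maximum-is-a-descent condition — to match up exactly under the chosen bijections; each of these is a delicate local statistic and the bijections must be the "right" ones (valley-hopping–compatible) for the conditions to transport cleanly. Once the correct normalization of Foata's bijection and of the valley-hopping action is pinned down, the verifications are routine but must be done carefully, ideally by exhibiting the action of each bijection on runs/descents explicitly.
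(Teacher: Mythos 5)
Your overall toolkit (Foata's fundamental bijection plus the Foata--Sch\"utzenberger--Strehl ``valley-hopping'' action) is the same as the paper's, but as written the proposal has a genuine gap: the anchor identity itself is never proven. You propose to show that the derangement polynomial equals $\sum_i c_i x^i(1+x)^{n-2i}$ with $c_i$ counting permutations with $i$ runs and no run of length one, but the ``run-insertion'' argument is only gestured at, and the fallback is to cite the $\gamma$-positivity of the derangement polynomial as known --- which is precisely the content of the theorem, so this is circular as a proof. Moreover you explicitly defer the ``main obstacle,'' namely verifying that the three side conditions (no short run, no double excedance, left-to-right maxima are descents) transport exactly under the chosen bijections; but that verification \emph{is} the theorem, so deferring it leaves nothing proved.

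The missing idea is the following. One cannot valley-hop on all of $\sS_n$ and then ``restrict to derangements,'' because the orbits of the FSS action do not respect the derangement condition in the excedance model. The paper's solution is to first transport $\dD_n$ through the standard-cycle-form map $\phi$ (using $\ex(u)=n-\ex(u^{-1})$ and the property that $w(i)<i$ iff $i$ is a descent of $\phi(w)$), landing in the set $\eE_n$ of permutations all of whose left-to-right maxima are descents, with $\ell_V(\sd(2^V),x)=\sum_{u\in\eE_n}x^{\des(u)}$. The crucial observation is then that the valley-hopping operators $\psi_i$ preserve $\eE_n$: a double ascent of $w\in\eE_n$ is never a left-to-right maximum, so the leftward move is always available and the values at left-to-right maxima are untouched. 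Hence $\eE_n$ decomposes into orbits, each with a unique double-descent-free representative, and the orbit sums $x^{\des(w)}(1+x)^{n-2\des(w)}$ give interpretation (iii) by uniqueness of the $\gamma$-expansion; (ii) and (i) then follow by pulling (iii) back through $\phi$ and taking inverses (double descents of $\phi(w)$ correspond to double excedances of $w^{-1}$, and excedances of a double-excedance-free derangement correspond to descending runs, none of length one, of its image). Without this restriction of the action to $\eE_n$ (or an equivalent device), your plan cannot be completed as stated.
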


\medskip
For the first few values of $n$ we have:

  $$ \sum_{i=0}^{\lfloor n/2 \rfloor} \, \xi_i x^i \ = \ \begin{cases}
    x, & \text{if \ $n=2, 3$} \\
    x + 5x^2, & \text{if \ $n=4$} \\
    x + 18x^2, & \text{if \ $n=5$} \\
    x + 47x^2+ 61x^3, & \text{if \ $n=6$} \\
    x + 108x^2 + 479x^3, & \text{if \ $n=7$} \\
    x + 233x^2 + 2414x^3 + 1385x^4, & \text{if \ $n=8$} \\
    x + 486x^2 + 9970x^3 + 19028x^4, & \text{if \ $n=9$}.  \end{cases} $$

\medskip
The right-hand side of (\ref{eq:localbary}) is known as the \emph{derangement
polynomial} of order $n$; see, for instance, \cite[Section~1]{CTZ09}, where some 
of its basic properties are summarized. Theorem~\ref{thm:bary} gives a 
combinatorial proof of the unimodality of this polynomial, thus answering a 
question of Brenti \cite{Bre90}; see the third comment in Section~\ref{sec:rem}. 
Moreover, it implies that for given $n$, the sum of the coefficients $\xi_i$ is 
equal to the total number of permutations in $\sS_n$ with no ascending run of 
length one. Such permutations have been considered (in a more general context) 
and enumerated by Gessel \cite[Chapter~5]{Ge77}.

We should point out that the nonnegativity of the numbers $\xi_i$ follows from 
the fact that the derangement polynomials are (symmetric and) real-rooted 
\cite{Zh95}. Alternatively, this can be deduced from \cite[Proposition 6.1]{Ath12}, 
which proves the nonnegativity of the local $\gamma$-vector for a family of flag
simplicial subdivisions which can be obtained from the trivial subdivision of 
a simplex by successive stellar subdivisions.  

\section{Subdivisions, clusters and noncrossing partitions}
\label{sec:back}

This section begins by recalling basic definitions on simplicial complexes,
simplicial subdivisions and their enumerative invariants. Cluster complexes are
then reviewed and cluster subdivisions are formally defined. The section ends
with a brief discussion of noncrossing partitions of types $A$ and $B$. More
information on these topics can be found in \cite{Bj95, FR07, Re97, StaCCA} and
references therein. Throughout this paper, $|S|$ denotes the cardinality, and
$2^S$ the set of all subsets, of a finite set $S$.

\subsection{Simplicial complexes} \label{subsec:complexes}
Given a finite set $\Omega$, an (abstract) simplicial complex on the ground
set $\Omega$ is a collection $\Delta$ of subsets of $\Omega$ such that $F
\subseteq G \in \Delta$ implies $F \in \Delta$. The elements of $\Delta$ are
called \emph{faces}. The dimension of a face $F$ is defined as one less than
the cardinality of $F$. The dimension of $\Delta$ is the maximum dimension of
a face and is denoted by $\dim (\Delta)$. Faces of $\Delta$ of dimension zero
are called \emph{vertices}. A face of $\Delta$ which is maximal with respect
to inclusion is called a \emph{facet}. The simplicial complex $\Delta$ is
said to be \emph{pure} if all its facets have the same dimension.
The \emph{link} of the face $F \in \Delta$ is the subcomplex of $\Delta$ defined
as $\link_\Delta (F) = \{ G \sm F: G \in \Delta, \, F \subseteq G\}$. The
\emph{restriction} of $\Delta$ on the ground set $\Omega_0 \subseteq \Omega$ is
the subcomplex of $\Delta$ consisting of those faces which are contained in
$\Omega_0$.

Suppose that $\Omega_1$ and $\Omega_2$ are two disjoint finite sets. The (simplicial)
\emph{join} $\Delta_1 \ast \Delta_2$ of two collections $\Delta_1$ and $\Delta_2$
of subsets of $\Omega_1$ and $\Omega_2$, respectively, is the collection
whose elements are the sets of the form $F_1 \cup F_2$, where $F_1 \in \Delta_1$
and $F_2 \in \Delta_2$. The join of two (or more) simplicial complexes is again
a simplicial complex.

Every simplicial complex $\Delta$ has a geometric realization $\|\Delta\|$
\cite[Section~9]{Bj95}, uniquely defined up to homeomorphism. All
topological properties of $\Delta$ we mention in the sequel will refer to
those of $\|\Delta\|$. In particular, we say that $\Delta$ is a simplicial
(topological) ball if $\|\Delta\|$ is homeomorphic to a ball. The
\emph{boundary} of a simplicial $d$-dimensional ball $\Delta$ is the
subcomplex $\partial \Delta$, consisting of all subsets of those
$(d-1)$-dimensional faces of $\Delta$ which are contained in a unique facet
of $\Delta$. The \emph{interior} of this ball is the set $\Delta \sm \partial
\Delta$; the \emph{interior faces} are the elements of $\Delta \sm \partial
\Delta$. For example, the (abstract) simplex $2^V$, consisting of all subsets
of an $n$-element set $V$, is a simplicial $(n-1)$-dimensional ball whose only
interior face is $V$. The join of two (or more) simplicial balls is a simplicial
ball whose interior is equal to the join of the interiors of these balls.

\subsection{Simplicial subdivisions} \label{subsec:sub}
Given a finite set $V$, a (finite, topological) \textit{simplicial
subdivision} \cite[Section~2]{Sta92} of the abstract simplex $2^V$ is a
simplicial complex $\Gamma$ together with a map $\sigma: \Gamma \to 2^V$,
such that the following hold for every $F \subseteq V$: (a) the set $\Gamma_F
:= \sigma^{-1} (2^F)$ is a subcomplex of $\Gamma$ which is a simplicial ball
of dimension $\dim(F)$; and (b) the interior of $\Gamma_F$ is equal to
$\sigma^{-1} (F)$. The set $\sigma(E)$ is called the \emph{carrier} of the
face $E \in \Gamma$. The complex $\Gamma_F$ is called the \emph{restriction}
of $\Gamma$ to $F \subseteq V$.
The subdivision $\Gamma$ is called \emph{quasi-geometric} \cite[Definition~4.1
(a)]{Sta92} if there do not exist $E \in \Gamma$ and face $F \in 2^V$ of
dimension smaller than $\dim(E)$, such that the carrier of every vertex of
$E$ is contained in $F$. Moreover, $\Gamma$ is called \emph{geometric}
\cite[Definition~4.1 (b)]{Sta92} if there exists a geometric realization of
$\Gamma$ which geometrically subdivides a geometric realization of $2^V$.

Suppose that $\Gamma$ is a simplicial subdivision of the simplex $2^V$
and $\Gamma'$ is a simplicial subdivision of the simplex $2^{V'}$, where $V$
and $V'$ are disjoint sets. The join $\Gamma \ast \Gamma'$ naturally becomes
a simplicial subdivision of the simplex $2^V \ast 2^{V'} = 2^{V \cup
V'}$ if one defines the carrier of a face $E \cup E' \in \Gamma \ast \Gamma'$
as the union of the carriers of $E \in \Gamma$ and $E' \in \Gamma'$. Given
faces $F \subseteq V$ and $F' \subseteq V'$, the restriction of $\Gamma \ast
\Gamma'$ to the face $F \cup F'$ of this simplex is then equal to $\Gamma_F
\ast \Gamma'_{F'}$.

\subsection{Face enumeration}
\label{subsec:enu}

A fundamental enumerative invariant of a simplicial complex $\Delta$ is the
$h$-polynomial, defined by
  $$ h(\Delta, x) \ = \ \sum_{F \in \Delta} \ x^{|F|} (1-x)^{d-|F|}, $$
where $\dim (\Delta) = d-1$. For the join of two simplicial complexes
$\Delta_1$ and $\Delta_2$ we have $h(\Delta_1 \ast \Delta_2, x) = h(\Delta_1,
x) h(\Delta_2, x)$.

The local $h$-vector of a simplicial subdivision of a simplex was defined in
\cite[Definition~2.1]{Sta92} as follows.

\begin{definition} \label{def:localh}
{\rm Let $V$ be an $n$-element set and $\Gamma$ be a simplicial subdivision
of the simplex $2^V$. The polynomial $\ell_V (\Gamma, x) = \ell_0 + \ell_1 x +
\cdots + \ell_n x^n$ defined by
  \begin{equation} \label{eq:deflocalh}
    \ell_V (\Gamma, x) \ = \sum_{F \subseteq V} \ (-1)^{n - |F|} \
    h (\Gamma_F, x)
  \end{equation}
is the \emph{local $h$-polynomial} of $\Gamma$ (with respect to $V$). The
sequence $\ell_V (\Gamma) = (\ell_0, \ell_1,\dots,\ell_n)$ is the \emph{local
$h$-vector} of $\Gamma$ (with respect to $V$). }
\end{definition}

The local $h$-vector $\ell_V (\Gamma) = (\ell_0, \ell_1,\dots,\ell_n)$ was shown
to be symmetric (meaning that $\ell_i = \ell_{n-i}$ holds for $0 \le i \le n$)
for every simplicial subdivision $\Gamma$ of $2^V$ \cite[Theorem~3.3]{Sta92} and
to have nonnegative entries for every quasi-geometric simplicial subdivision
$\Gamma$ of $2^V$ \cite[Corollary~4.7]{Sta92}. Moreover (see
\cite[Example~2.3]{Sta92}), $\ell_0 = 0$ and $\ell_1$ is equal to the number of
interior vertices of $\Gamma$, for $n \ge 1$.

We recall from the introduction that, given a simplicial subdivision $\Gamma$ of
an $(n-1)$-dimensional simplex $2^V$, the local $\gamma$-polynomial $\xi_V (\Gamma,
x) = \xi_0 + \xi_1 x + \cdots + \xi_{\lfloor n/2 \rfloor} x^{\lfloor n/2 \rfloor}$
of $\Gamma$ (with respect to $V$) is uniquely defined by
  \begin{equation} \label{eq:defxi}
    \ell_V (\Gamma, x) \ = \ (1+x)^n \ \xi_V \left( \Gamma, \frac{x}{(1+x)^2}
    \right) \, = \, \sum_{i=0}^{\lfloor n/2 \rfloor} \, \xi_i x^i (1+x)^{n-2i}.
  \end{equation}

The following lemma will be used in the proof of Corollary~\ref{cor:clocalg}.

\begin{lemma} \label{lem:localjoin}
Let $V$ and $V'$ be disjoint finite sets. For all simplicial subdivisions $\Gamma$
of $2^V$ and $\Gamma'$ of $2^{V'}$ we have $\ell_{V \cup V'} \, (\Gamma \ast \Gamma',
x) = \ell_V (\Gamma, x) \, \ell_{V'} (\Gamma', x)$ and $\xi_{V \cup V'} \, (\Gamma
\ast \Gamma', x) = \xi_V (\Gamma, x) \, \xi_{V'} (\Gamma', x)$.
\end{lemma}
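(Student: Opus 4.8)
The plan is to prove the first identity $\ell_{V \cup V'}(\Gamma \ast \Gamma', x) = \ell_V(\Gamma, x)\,\ell_{V'}(\Gamma', x)$ directly from Definition~\ref{def:localh}, and then deduce the second identity as a formal consequence of the first. Write $|V| = n$ and $|V'| = n'$. By definition,
  $$ \ell_{V \cup V'}(\Gamma \ast \Gamma', x) \ = \sum_{G \subseteq V \cup V'} (-1)^{n + n' - |G|} \, h\bigl((\Gamma \ast \Gamma')_G, x\bigr). $$
Every subset $G \subseteq V \cup V'$ factors uniquely as $G = F \cup F'$ with $F = G \cap V \subseteq V$ and $F' = G \cap V' \subseteq V'$, and $|G| = |F| + |F'|$, so the sign splits as $(-1)^{n-|F|}(-1)^{n'-|F'|}$. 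The key structural input, recorded at the end of Section~\ref{subsec:sub}, is that the restriction of $\Gamma \ast \Gamma'$ to the face $F \cup F'$ equals $\Gamma_F \ast \Gamma'_{F'}$. Combined with the multiplicativity of the $h$-polynomial under joins (recorded in Section~\ref{subsec:enu}), this gives $h\bigl((\Gamma \ast \Gamma')_{F \cup F'}, x\bigr) = h(\Gamma_F, x)\, h(\Gamma'_{F'}, x)$.

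Substituting and factoring the double sum over $F$ and $F'$ yields
  $$ \ell_{V \cup V'}(\Gamma \ast \Gamma', x) \ = \ \left( \sum_{F \subseteq V} (-1)^{n-|F|} h(\Gamma_F, x) \right) \left( \sum_{F' \subseteq V'} (-1)^{n'-|F'|} h(\Gamma'_{F'}, x) \right) \ = \ \ell_V(\Gamma, x)\, \ell_{V'}(\Gamma', x), $$
which is the first claim. For the second, recall from~(\ref{eq:defxi}) that $\ell_V(\Gamma, x) = (1+x)^n\, \xi_V\!\left(\Gamma, \tfrac{x}{(1+x)^2}\right)$, and likewise for $\Gamma'$ and for $\Gamma \ast \Gamma'$ with exponent $n + n'$. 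Evaluating the first identity at the argument $y = \tfrac{x}{(1+x)^2}$ and using these substitutions, the factor $(1+x)^{n+n'}$ on the left matches the product $(1+x)^n (1+x)^{n'}$ on the right, so
  $$ (1+x)^{n+n'}\, \xi_{V \cup V'}\!\left(\Gamma \ast \Gamma', \tfrac{x}{(1+x)^2}\right) \ = \ (1+x)^{n+n'}\, \xi_V\!\left(\Gamma, \tfrac{x}{(1+x)^2}\right)\, \xi_{V'}\!\left(\Gamma', \tfrac{x}{(1+x)^2}\right). $$
Cancelling $(1+x)^{n+n'}$ and noting that the substitution $x \mapsto \tfrac{x}{(1+x)^2}$ determines a polynomial identity (one may, for instance, observe both sides are polynomials in $x$ agreeing on infinitely many values, or invoke the uniqueness in~(\ref{eq:defxi}) applied to $\Gamma \ast \Gamma'$), we conclude $\xi_{V \cup V'}(\Gamma \ast \Gamma', x) = \xi_V(\Gamma, x)\, \xi_{V'}(\Gamma', x)$.

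The argument is essentially bookkeeping, so there is no serious obstacle; the only point requiring a modicum of care is the last step, legitimizing the passage from the identity in the variable $y = \tfrac{x}{(1+x)^2}$ back to an identity of polynomials in $x$. This is harmless since the map $x \mapsto \tfrac{x}{(1+x)^2}$ is nonconstant on any infinite subset of the reals avoiding $x = -1$, so two polynomials in $y$ agreeing after this substitution for all such $x$ must agree as polynomials; alternatively, one avoids the issue entirely by defining $\xi_{V \cup V'}(\Gamma \ast \Gamma', x)$ via~(\ref{eq:defxi}) and checking that $\xi_V(\Gamma, x)\,\xi_{V'}(\Gamma', x)$ satisfies the same defining relation, which forces equality by the uniqueness clause.
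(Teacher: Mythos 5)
Your proposal is correct and follows essentially the same route as the paper: expand the defining alternating sum, split it over pairs $(F, F')$ using $(\Gamma \ast \Gamma')_{F \cup F'} = \Gamma_F \ast \Gamma'_{F'}$ and the multiplicativity of $h$-polynomials under joins, then deduce the $\xi$ identity from (\ref{eq:defxi}). The paper's proof is identical except that it dispatches the second identity in one sentence, whereas you spell out the (harmless) justification for passing back from the substituted variable to a polynomial identity.
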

\begin{proof}
Let $n = |V|$ and $n' = |V'|$. Using the defining
equation~(\ref{eq:deflocalh}), we find that
  \begin{align*}
    \ell_{V \cup V'} \, (\Gamma \ast \Gamma', x)  & \ = \ \sum_{F \subseteq V}
    \sum_{F' \subseteq V'} \ (-1)^{|V \cup V'| - |F \cup F'|} \,
      h ( (\Gamma \ast \Gamma')_{F \cup F'}, x)  \\
    &  \\
    & \ = \ \sum_{F \subseteq V} \sum_{F' \subseteq V'} \ (-1)^{n+n' - |F| - |F'|} \,
      h (\Gamma_F \ast \Gamma'_{F'}, x)  \\
    &  \\
    & \ = \ \sum_{F \subseteq V} \sum_{F' \subseteq V'} \ (-1)^{n - |F|} \,
      h (\Gamma_F, x) \ (-1)^{n' - |F'|} \, h (\Gamma'_{F'}, x) \\
    &  \\
    & \ = \ \ell_V (\Gamma, x) \, \ell_{V'} (\Gamma', x).
  \end{align*}
  This result and (\ref{eq:defxi}) imply that $\xi_{V \cup V'} \,
(\Gamma \ast \Gamma', x) = \xi_V (\Gamma, x) \, \xi_{V'} (\Gamma',
x)$.
\end{proof}

\subsection{Cluster complexes and subdivisions} \label{subsec:cluster}
Let $\Phi$ be a finite root system of rank $n$. As in the introduction, we
will fix a positive system $\Phi^+$ with corresponding simple system $\Pi =
\{ \alpha_i: i \in I\}$, where $I$ is an $n$-element index set, and set
$\Phi_{\ge -1} := \Phi^+ \cup (-\Pi)$. For $J \subseteq I$, the standard
parabolic root subsystem $\Phi_J$ is endowed with the induced positive
system $\Phi^+_J = \Phi^+ \cap \Phi_J$ and corresponding simple system
$\Pi_J = \{\alpha_i: i \in J\}$.

The cluster complex $\Delta (\Phi)$ is a simplicial complex on the
vertex set $\Phi_{\ge -1}$. Its faces are the sets consisting of
mutually compatible elements of $\Phi_{\ge -1}$, where compatibility
is a symmetric binary relation on $\Phi_{\ge -1}$ defined in
\cite[Section~3]{FZ03}. We refer the reader to \cite{FZ03}
\cite[Section~4.3]{FR07} for the precise definition of compatibility
and collect the properties of $\Delta (\Phi)$ and its restriction
$\Delta_+ (\Phi)$ on the vertex set $\Phi^+$ which will be important
for us, in the following proposition. Part~(ii) is implicit in
\cite[Section~3]{FZ03} (see Lemma~3.12 and the proof of Theorem~1.10
there) and \cite[Section~8]{BW08}. The other parts follow directly
from the results of \cite[Section~3]{FZ03}.

\begin{proposition} \label{prop:cback}
\begin{enumerate}
\itemsep=0pt
\item[(i)]
The cluster complex $\Delta(\Phi)$ is homeomorphic to an
$(n-1)$-dimensional sphere.

\item[(ii)]
The complex $\Delta_+ (\Phi)$ is homeomorphic to an $(n-1)$-dimensional
ball.

\item[(iii)]
For $J \subseteq I$ we have $\link_{\Delta (\Phi)} (-\Pi_J) = \Delta
(\Phi_J)$.

\item[(iv)]
For $J \subseteq I$, the restriction of $\Delta (\Phi)$ to the vertex set
$(\Phi_J)_{\ge -1}$ is equal to $\Delta (\Phi_J)$ and that of $\Delta_+
(\Phi)$ to the vertex set $\Phi^+_J$ is equal to $\Delta_+ (\Phi_J)$.

\item[(v)]
If $\Phi$ is a direct product $\Phi_1 \times \Phi_2$, then $\Delta (\Phi)
= \Delta (\Phi_1) \ast \Delta (\Phi_2)$ and $\Delta_+ (\Phi) = \Delta_+
(\Phi_1) \ast \Delta_+ (\Phi_2)$.
\qed
\end{enumerate}
\end{proposition}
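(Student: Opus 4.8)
The plan is to reduce all five parts to two ingredients: the combinatorial properties of the \emph{compatibility degree} $(\cdot\,\|\,\cdot)\colon \Phi_{\ge -1}\times\Phi_{\ge -1}\to\ZZ_{\ge 0}$ of \cite[Section~3]{FZ03}, and the topological fact, proved there, that $\Delta(\Phi)$ is a shellable $(n-1)$-dimensional sphere. Part~(i) we take essentially as input, since it is \cite[Theorem~1.10]{FZ03} (for crystallographic $\Phi$ one may instead use the polytopal realization of \cite{CFZ02}, and the types $I_2(m)$, $H_3$, $H_4$ can be inspected directly). The features of the compatibility degree we rely on are: the relation ``$(\alpha\,\|\,\beta)=0$'', i.e.\ compatibility, is symmetric; $(-\alpha_i\,\|\,\gamma)$ equals the coefficient of $\alpha_i$ in the expansion of $\gamma\in\Phi^+$ in the basis $\Pi$, and in particular $(-\alpha_i\,\|\,-\alpha_j)=0$ for all $i,j$; the parabolic property that the compatibility degree of $\alpha,\beta\in(\Phi_J)_{\ge -1}$, computed in $\Phi$, agrees with the one computed in $\Phi_J$; and $(\alpha\,\|\,\beta)=0$ whenever $\alpha,\beta$ lie in orthogonal direct factors of $\Phi$.

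Granting these, I would first dispose of parts~(iii)--(v), each a short computation. For~(v), when $\Phi=\Phi_1\times\Phi_2$ we have $\Phi_{\ge -1}=(\Phi_1)_{\ge -1}\sqcup(\Phi_2)_{\ge -1}$; by orthogonality every element of the first part is compatible with every element of the second, while compatibility within $(\Phi_i)_{\ge -1}$ is governed by $\Delta(\Phi_i)$ via the parabolic property, so the faces of $\Delta(\Phi)$ are exactly the sets $F_1\cup F_2$ with $F_i\in\Delta(\Phi_i)$, that is $\Delta(\Phi)=\Delta(\Phi_1)\ast\Delta(\Phi_2)$; restricting to $\Phi^+=\Phi^+_1\sqcup\Phi^+_2$ gives the statement for $\Delta_+(\Phi)$. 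For~(iv), the vertex set of the restriction of $\Delta(\Phi)$ to $(\Phi_J)_{\ge -1}$ is, by definition, $(\Phi_J)_{\ge -1}$, and a subset of it is a face precisely when its elements are pairwise compatible in $\Delta(\Phi)$, which by the parabolic property is equivalent to being a face of $\Delta(\Phi_J)$; intersecting vertex sets with $\Phi^+$ yields the claim for $\Delta_+(\Phi)$. For~(iii), using the formula for $(-\alpha_i\,\|\,\gamma)$ one finds that the elements of $\Phi_{\ge -1}$ compatible with every vertex in $-\Pi_J$ form exactly the ground set of the cluster complex of a standard parabolic subsystem of $\Phi$, and the parabolic property identifies the induced compatibility relation, so $\link_{\Delta(\Phi)}(-\Pi_J)$ is that cluster complex, as asserted.

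The remaining part~(ii) is where the real work lies, and I expect it to be the main obstacle: knowing only that $\Delta(\Phi)$ is a sphere does not by itself force its restriction $\Delta_+(\Phi)$ to $\Phi^+$ to be a ball, so one must exploit the internal structure of \cite[Section~3]{FZ03} (in the spirit of its Lemma~3.12 and the proof of Theorem~1.10; see also \cite[Section~8]{BW08}). The plan is to verify that $\Delta_+(\Phi)$ is pure of dimension $n-1$, that each of its $(n-2)$-dimensional faces lies in at most two facets, that at least one such face lies in exactly one facet, and that $\Delta_+(\Phi)$ is shellable (hence strongly connected); a shellable complex with these properties is a ball. The second condition is inherited from the \emph{exchange property} of $\Delta(\Phi)$, since a ridge $\tau\subseteq\Phi^+$ of $\Delta_+(\Phi)$ lies in exactly two facets of $\Delta(\Phi)$ and hence in at most two that are contained in $\Phi^+$; the third holds because $-\Pi$ is a facet of the sphere $\Delta(\Phi)$ not present in $\Delta_+(\Phi)$, so $\Delta_+(\Phi)$ is a proper subcomplex and cannot be a closed pseudomanifold. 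The delicate points are therefore purity --- which reduces to showing that every face of $\Delta_+(\Phi)$ is contained in some cluster lying inside $\Phi^+$ --- and shellability, obtained by adapting a shelling of $\Delta(\Phi)$ or invoking that of \cite[Section~8]{BW08}. Once part~(i) and these facts are available, the remaining parts are routine bookkeeping with the compatibility degree.
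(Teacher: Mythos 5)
Your proposal is correct and follows essentially the same route as the paper, which in fact offers no argument beyond attribution: it declares parts (i) and (iii)--(v) to ``follow directly from the results of \cite[Section~3]{FZ03}'' (the compatibility-degree properties you list) and part (ii) to be ``implicit'' in \cite[Lemma~3.12, Theorem~1.10]{FZ03} and \cite[Section~8]{BW08}, which are precisely the purity and shellability facts you correctly isolate as the only nontrivial content. The one point worth tightening is part (iii), where you leave unspecified \emph{which} parabolic subsystem arises as the link of $-\Pi_J$; the coefficient formula for $(-\alpha_i\,\|\,\gamma)$ identifies it as the parabolic supported on the simple roots \emph{not} indexed by $J$ (cf.\ \cite[Proposition~3.5]{FZ03}), a detail that matters when this part is invoked in the proof of Proposition~\ref{prop:csub}.
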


The following result of \cite{AT06} will be needed in
Section~\ref{sec:cproof} in order to compute the right-hand side
of (\ref{eq:clocalh}).

\begin{lemma} {\rm (\cite[Proposition~6.1]{AT06})}  \label{lem:h+}
For the $h$-polynomial of $\Delta_+ (\Phi)$ we have

  $$ h (\Delta_+ (\Phi), x) \ = \ \begin{cases}
    {\displaystyle \sum_{i=0}^n \, \frac{1}{i+1} \binom{n}{i} \binom{n-1}{i}
    x^i}, & \text{if \ $\xX = A_n$} \\ & \\
    {\displaystyle \sum_{i=0}^n \binom{n}{i} \binom{n-1}{i} x^i},
    & \text{if \ $\xX = B_n$} \\ & \\
    {\displaystyle \sum_{i=0}^n \left( \binom{n}{i} \binom{n-2}{i} +
    \binom{n-2}{i-2} \binom{n-1}{i} \right) x^i}, & \text{if \ $\xX = D_n$},
    \end{cases} $$
where $\xX$ is the Cartan-Killing type of $\Phi$.
\qed
\end{lemma}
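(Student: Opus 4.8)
The plan is to deduce this statement (which is \cite[Proposition~6.1]{AT06}) from the known $h$-polynomials of the full cluster complexes $\Delta(\Phi)$, by realizing $\Delta_+(\Phi)$ as an iterated vertex deletion from $\Delta(\Phi)$. The basic tool is the identity
$$ h(\mathrm{ast}_\Delta (v), x) \ = \ h(\Delta, x) \ - \ x \, h(\link_\Delta (v), x), $$
valid for any finite simplicial complex $\Delta$ and any vertex $v$ lying in some, but not every, facet of $\Delta$; here $\mathrm{ast}_\Delta (v)$ denotes the antistar of $v$, the subcomplex of faces of $\Delta$ not containing $v$. This follows by comparing $f$-vectors, since every face of $\Delta$ either avoids $v$ or arises from a face of $\link_\Delta (v)$ by adjoining $v$, and the hypothesis on $v$ guarantees that $\dim \mathrm{ast}_\Delta (v) = \dim \Delta$, so that no degree shift intervenes. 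Now $\Delta_+(\Phi)$ is exactly $\mathrm{ast}_{\Delta(\Phi)} (\{-\alpha_i : i \in I\})$, obtained from $\Delta(\Phi)$ by deleting the $n$ negative simple roots one at a time; at each stage the current complex still possesses a facet disjoint from $-\Pi$, since $\Delta_+(\Phi)$ is an $(n-1)$-dimensional ball by Proposition~\ref{prop:cback}(ii), so the identity applies throughout the iteration. Using Proposition~\ref{prop:cback}(iii) to identify $\link_{\Delta(\Phi)} (\{-\alpha_i : i \in J\})$ with the cluster complex of the standard parabolic subsystem whose Dynkin diagram is obtained from that of $\Phi$ by deleting the nodes indexed by $J$, the iteration yields
$$ h(\Delta_+(\Phi), x) \ = \ \sum_{J \subseteq I} \ (-x)^{|J|} \ h(\Delta(\Phi_{I \sm J}), x). $$

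For the classical types I would next exploit the parabolic structure. Deleting nodes from the Dynkin diagram of $A_n$, $B_n$ or $D_n$ produces a disjoint union of type-$A$ diagrams together with at most one further diagram, of type $B$, $D$ or $A$, carrying the distinguished end (the multiple bond in type $B$, the branch node in type $D$). By Proposition~\ref{prop:cback}(v) and the multiplicativity of the $h$-polynomial under joins, $h(\Delta(\Phi_{I \sm J}), x)$ factors as the corresponding product. The $h$-polynomials of the full classical cluster complexes are classical (see \cite{FZ03}): writing $a_m = h(\Delta(A_m), x)$ and $b_m = h(\Delta(B_m), x)$, one has $a_m = \sum_i \frac{1}{m+1} \binom{m+1}{i} \binom{m+1}{i+1} x^i$ and $b_m = \sum_i \binom{m}{i}^2 x^i$, while $h(\Delta(D_m), x)$ is the polynomial displayed in the statement for $m \ge 4$, with $h(\Delta(D_2), x) = a_1^2$ and $h(\Delta(D_3), x) = a_3$ input separately. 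Organizing the sum over $J$ according to the block structure of $I \sm J$ and summing the geometric series over the deleted nodes, one is led to
$$ \sum_{n \ge 0} h(\Delta_+(A_n), x) \, t^n \ = \ \frac{A(t,x)}{G(t,x)}, \qquad \sum_{n \ge 0} h(\Delta_+(B_n), x) \, t^n \ = \ \frac{B(t,x)}{G(t,x)}, $$
where $A(t,x) = \sum_{m \ge 0} a_m t^m$, $B(t,x) = \sum_{m \ge 0} b_m t^m$ and $G(t,x) = 1 + xt\, A(t,x)$, together with an analogous but bulkier identity for type $D$ in which the distinguished block contributes a combination of $D(t,x) = \sum_{m \ge 0} h(\Delta(D_m),x) t^m$ with series built from $A(t,x)$. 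The crucial structural fact is that $G(t,x)$ is essentially the bivariate Narayana generating function and satisfies the quadratic functional equation $t\, G^2 - (1 + t - tx)\, G + 1 = 0$; consequently $1/G = (1 + t - tx) - t\, G$ is linear in $G$, and the coefficient of $t^n$ in each of the expressions above can be extracted directly in terms of the coefficients of $A$, $B$, $D$ and $G$. For type $A$ this reduces, after the simplification $\frac{1}{m} \binom{m}{i+1} = \frac{1}{i+1} \binom{m-1}{i}$, to the statement that $h(\Delta_+(A_n), x)$ is the $n$-th Narayana polynomial divided by $x$, which is exactly the claimed formula; type $B$ comes out of the same manipulation together with a Vandermonde-type summation.

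The first paragraph is routine, and types $A$ and $B$ amount to short computations once the generating functions are in hand; the genuine obstacle is type $D_n$. There the branch node forces several degenerate sub-cases in the block decomposition, according to how many of the two fork nodes survive in $I \sm J$; the explicit polynomial $h(\Delta(D_m), x)$ is a sum of two products of binomial coefficients rather than a single one; and this polynomial agrees with the stated closed form only for $m \ge 4$, so the small-rank instances $D_2 \cong A_1 \times A_1$ and $D_3 \cong A_3$ have to be treated, and checked, separately. Assembling the correct type-$D$ generating function and then pushing through the final binomial identity is where essentially all of the work lies.
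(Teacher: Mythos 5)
The paper does not actually prove this statement: it is quoted verbatim from \cite[Proposition~6.1]{AT06} and used as input, so there is no internal proof to compare against. Judged as an independent derivation, your first step is sound: the relation $h(\Delta_+(\Phi),x)=\sum_{J\subseteq I}(-x)^{|J|}\,h(\Delta(\Phi_{I\sm J}),x)$ is correct (and can be obtained more cleanly than by iterated antistar deletion, by partitioning the faces of $\Delta(\Phi_K)$ according to the set of negative simple roots they contain, which gives $h(\Delta(\Phi_K),x)=\sum_{J\subseteq K}x^{|K\sm J|}h(\Delta_+(\Phi_J),x)$, and then inverting over the Boolean lattice; this avoids the purity/dimension bookkeeping your iteration needs, and note that the link of $-\Pi_J$ is $\Delta(\Phi_{I\sm J})$, as you use). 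The type $A$ generating function $A(t,x)/\bigl(1+xt\,A(t,x)\bigr)$ is also the right shape.

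There are, however, two genuine gaps. First, a concrete error in the inputs: you identify $h(\Delta(D_m),x)$ with ``the polynomial displayed in the statement,'' but that displayed polynomial is the $h$-polynomial of the \emph{positive part} $\Delta_+(D_m)$ — the very quantity being computed — not of the full cluster complex $\Delta(D_m)$, whose $h$-vector is the type $D$ Narayana vector (for $D_4$ the two polynomials already have different values $20$ and $50$ at $x=1$). As set up, the type $D$ computation would therefore either be circular or simply use the wrong series; the correct input is the type $D$ analogue of $\binom{m}{i}^2$, not the statement's formula. Second, the proof is not carried out where it matters: the coefficient extraction in types $A$ and $B$ is only asserted (``one is led to\dots'', ``a Vandermonde-type summation''), and the type $D$ case — assembling the generating function with the branch-node case analysis, handling $D_2\cong A_1\times A_1$ and $D_3\cong A_3$, and verifying the final binomial identity — is explicitly deferred, even though you acknowledge it contains essentially all of the work. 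As it stands this is a plausible plan with a flawed type $D$ setup, not a proof; to make it complete you would need to correct the type $D$ input and actually perform the three coefficient computations (or simply cite \cite{AT06}, as the paper does).
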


We now formally define the cluster subdivision $\Gamma (\Phi)$. Given
a positive root $\alpha \in \Phi^+$, there is a unique set $J \subseteq I$
such that $\alpha$ is a positive linear combination of the elements of
$\Pi_J$. We call $\Pi_J$ the \emph{support} of $\alpha$ and for $E \in
\Delta_+ (\Phi)$, we denote by $\sigma(E)$ the union of the supports of the
elements of $E$. Equivalently, $\sigma(E)$ is the smallest set $\Pi_J
\subseteq \Pi$ such that $\alpha \in \Phi_J^+$ for every $\alpha \in E$.

  \begin{figure}
  \epsfysize = 2.5 in \centerline{\epsffile{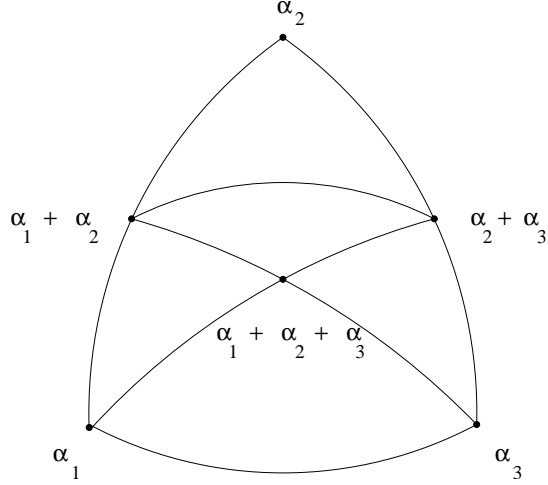}}
  \caption{The cluster subdivision of type $A_3$.}
  \label{fig:cposA3}
  \end{figure}

\begin{proposition} \label{prop:csub}
The map $\sigma: \Delta_+ (\Phi) \mapsto 2^\Pi$ defines a simplicial
subdivision $\Gamma(\Phi)$ of the simplex $2^\Pi$, whose local
$h$-polynomial is given by (\ref{eq:clocalh}).
\end{proposition}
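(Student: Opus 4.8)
The plan is to verify the two defining conditions (a) and (b) of a simplicial subdivision from Section~\ref{subsec:sub} for the map $\sigma: \Delta_+(\Phi) \to 2^\Pi$, and then to derive the formula~(\ref{eq:clocalh}) for the local $h$-polynomial as an immediate consequence of the definition~(\ref{eq:deflocalh}) together with part~(iv) of Proposition~\ref{prop:cback}. First I would note the reduction: by part~(iv) of Proposition~\ref{prop:cback}, for $J \subseteq I$ the restriction of $\Delta_+(\Phi)$ to the vertex set $\Phi^+_J$ equals $\Delta_+(\Phi_J)$; moreover, unwinding the definition of $\sigma$, a face $E \in \Delta_+(\Phi)$ satisfies $\sigma(E) \subseteq \Pi_J$ precisely when every element of $E$ lies in $\Phi^+_J$, so $\sigma^{-1}(2^{\Pi_J})$ is exactly this restriction. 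Hence $\Gamma(\Phi)_{\Pi_J} = \Delta_+(\Phi_J)$ as abstract simplicial complexes; once this is established, everything reduces to statements about the complexes $\Delta_+(\Phi_J)$.

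For condition~(a), I must check that $\Gamma(\Phi)_{\Pi_J} = \Delta_+(\Phi_J)$ is a simplicial ball of dimension $\dim(\Pi_J) = |J| - 1$. If $\Phi_J$ is irreducible this is exactly part~(ii) of Proposition~\ref{prop:cback} applied to the root system $\Phi_J$ of rank $|J|$. In general $\Phi_J$ is a direct product of irreducible parabolic subsystems, so by part~(v) of Proposition~\ref{prop:cback} the complex $\Delta_+(\Phi_J)$ is a join of balls, and a join of simplicial balls is again a simplicial ball (of dimension the sum of the dimensions plus the number of factors minus one), as recalled at the end of Section~\ref{subsec:complexes}; the dimension count works out to $|J|-1$. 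For condition~(b), I need the interior of $\Gamma(\Phi)_{\Pi_J}$ to equal $\sigma^{-1}(\Pi_J)$, i.e.\ the set of faces $E$ with $\sigma(E) = \Pi_J$ exactly (not a proper subset). Again using part~(v), the interior of a join of balls is the join of the interiors; a face $E$ of $\Delta_+(\Phi_J)$ lies in the interior iff its component in each irreducible factor $\Delta_+(\Phi_{J_k})$ is interior there, and by induction (base case: the irreducible case) the interior of $\Delta_+(\Phi_{J_k})$ consists of those faces whose support is all of $\Pi_{J_k}$. So the interior of $\Delta_+(\Phi_J)$ consists of faces whose support meets every irreducible component fully, i.e.\ has support exactly $\Pi_J$, which is precisely $\sigma^{-1}(\Pi_J)$. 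This also requires knowing, in the irreducible case, that the interior of $\Delta_+(\Phi)$ is $\{E : \sigma(E) = \Pi\}$; this should be extracted from the structure of $\Delta_+(\Phi)$ as a ball and the observation that a facet $G$ of $\Delta_+(\Phi)$ with some simple root absent from $\operatorname{supp}(G)$ is exactly a facet lying on the boundary.

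The main obstacle I anticipate is this last point, namely identifying the boundary $\partial\Delta_+(\Phi)$ of the ball combinatorially with the faces of deficient support. One clean route is to use the sphere $\Delta(\Phi)$ from part~(i): the ball $\Delta_+(\Phi)$ is obtained from the sphere by deleting the open star of the face $-\Pi$, so $\partial\Delta_+(\Phi)$ is the link of $-\Pi$ in the appropriate sense, and more usefully, for a codimension-one face $E$ of $\Delta_+(\Phi)$, the two facets of $\Delta(\Phi)$ containing $E$ either both lie in $\Delta_+(\Phi)$ (so $E$ is interior) or one of them uses a negative simple root $-\alpha_i$, which by the compatibility combinatorics of \cite{FZ03} happens exactly when $\alpha_i \notin \operatorname{supp}(E)$. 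Granting this, $E$ is a boundary face of $\Delta_+(\Phi)$ iff $\operatorname{supp}(E) \neq \Pi$, and since every proper face of $\Delta_+(\Phi)$ is contained in such a boundary codimension-one face while $\Pi$-supported faces are not, the interior is exactly $\{E : \operatorname{supp}(E) = \Pi\}$. Finally, with conditions (a) and (b) verified, formula~(\ref{eq:clocalh}) follows by plugging $\Gamma(\Phi)_{\Pi_J} = \Delta_+(\Phi_J)$ into Definition~\ref{def:localh} and reindexing the sum over $F = \Pi_J \subseteq \Pi$ by $J \subseteq I$, with the sign $(-1)^{n-|F|} = (-1)^{|I\setminus J|}$; this is purely formal. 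I would also remark that geometricity of $\Gamma(\Phi)$, used later, follows from the polytopal realization of $\Delta(\Phi)$ in \cite{CFZ02}, but that is not needed for the present proposition.
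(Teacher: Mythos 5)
Your proposal is correct and follows essentially the same route as the paper: reduce to showing that $\sigma^{-1}(2^{\Pi_J})=\Delta_+(\Phi_J)$ is a ball of the right dimension via Proposition~\ref{prop:cback}, identify the boundary with the union of the proper restrictions by the ``two facets of the sphere $\Delta(\Phi)$, at most one containing a negative simple root'' argument, and then read off (\ref{eq:clocalh}) formally from Definition~\ref{def:localh}. The only differences are cosmetic: you handle reducible $\Phi_J$ explicitly via joins (the paper simply applies part~(ii) to the root system $\Phi_J$, which need not be irreducible), and you invoke the compatibility rule for $-\alpha_i$ from \cite{FZ03} directly where the paper cites the equivalent link statement in part~(iii).
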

\begin{proof}
It suffices to show that for every $J \subseteq I$: (a)
$\sigma^{-1} (2^{\Pi_J})$ is a subcomplex of $\Delta_+ (\Phi)$
which is homeomorphic to a ball of dimension $|J| - 1$; (b)
$\sigma^{-1} (\Pi_J)$ is the interior of this ball; and (c)
$\sigma^{-1} (2^{\Pi_J}) = \Delta_+ (\Phi_J)$. Indeed, (a) and (b)
confirm that $\sigma$ defines a simplicial subdivision of the
simplex $2^\Pi$ and (c) ensures that the restriction of this
subdivision to the face $\Pi_J$ of $2^\Pi$ is equal to $\Delta_+
(\Phi_J)$. Equation~(\ref{eq:clocalh}) is a consequence of the
last statement and Definition~\ref{def:localh}.

Part~(c) follows from the definition of the map $\sigma$ and
Proposition~\ref{prop:cback}~(iv) and part~(a) follows from (c) and
Proposition~\ref{prop:cback}~(ii). To verify (b), we may assume that
$J = I$. We need to show that the boundary of $\Delta_+ (\Phi)$ is
equal to the union of the subcomplexes $\Delta_+ (\Phi_J)$, where
$J$ runs through the proper subsets of $I$. For that, it suffices to
show that an $(n-2)$-dimensional face, say $E$, of $\Delta_+ (\Phi)$
is contained in a unique facet of $\Delta_+ (\Phi)$ if and only if
$E \in \Delta_+ (\Phi_J)$ for some $(n-1)$-element set $J \subseteq
I$. This is a consequence of parts~(i) and (iii) of
Proposition~\ref{prop:cback}. Indeed, part~(i) implies that $E$ is
contained in exactly two facets of $\Delta (\Phi)$. Part~(iii)
implies that at most one of these contains a negative simple root
and that this is the case if and only if $E \in \Delta_+ (\Phi_J)$
for some $(n-1)$-element set $J \subseteq I$.
\end{proof}

\begin{example} \label{ex:csubA3}
{\rm The complex $\Delta_+ (\Phi)$ and cluster subdivision
$\Gamma(\Phi)$ are drawn on Figure~\ref{fig:cposA3} for the root
system $\Phi$ of type $A_3$. The simple roots $\alpha_1, \alpha_2,
\alpha_3$ have been labeled so that $\alpha_1$ is orthogonal to
$\alpha_3$.

The subdivision $\Gamma(\Phi)$ triangulates the 2-dimensional simplex $2^\Pi$
into five 2-dimensional simplices, which are the facets of $\Delta_+
(\Phi)$. There is one interior vertex, namely $\alpha_1 + \alpha_2 + \alpha_3$.
The supports of $\alpha_1 + \alpha_2$ and $\alpha_2 + \alpha_3$ are equal to
$\{\alpha_1, \alpha_2\}$ and $\{\alpha_2, \alpha_3\}$, respectively. The
restriction of $\Gamma(\Phi)$ on the face $\{\alpha_1, \alpha_2\}$ of $2^\Pi$
is a subdivision of a 1-dimensional simplex with one interior vertex, namely
$\alpha_1 + \alpha_2$. }
\qed
\end{example}

\begin{remark} \label{rem:quiver}
{\rm One can define a cluster complex, and hence a corresponding
cluster subdivision, for every orientation of the Dynkin diagram
of $\Phi$ \cite{MRZ03}; see also \cite[Section~7]{Re07} (the
cluster complex of \cite{FZ03} \cite[Section~4.3]{FR07}, treated
here, corresponds to the alternating orientation). By
\cite[Proposition~3.4]{MRZ03} (see also
\cite[Proposition~7.3]{Re07}) and the results of
\cite[Section~6]{MRZ03}, the $h$-vector of the positive part of
the cluster complex and the local $h$-vector of the corresponding
cluster subdivision do not depend on the orientation chosen. \qed
}
\end{remark}

We conclude this section with the following lemma, which will be
used in the proof of Corollary~\ref{cor:clocalg}.

\begin{lemma} \label{lem:cjoin}
If $\Phi$ is a direct product $\Phi_1 \times \Phi_2$, then $\Gamma (\Phi)
= \Gamma (\Phi_1) \ast \Gamma (\Phi_2)$.
\end{lemma}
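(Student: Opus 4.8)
The plan is to unwind the definition of the cluster subdivision on both sides and observe that the map~$\sigma$ for the product root system factors through the maps for the factors. First I would use Proposition~\ref{prop:cback}~(v), which states that $\Delta_+ (\Phi) = \Delta_+ (\Phi_1) \ast \Delta_+ (\Phi_2)$, so that the underlying simplicial complexes of $\Gamma(\Phi)$ and $\Gamma(\Phi_1) \ast \Gamma(\Phi_2)$ coincide. It then remains to check that the carrier maps agree, i.e.\ that for a face $E = E_1 \cup E_2 \in \Delta_+ (\Phi_1) \ast \Delta_+ (\Phi_2)$, the support $\sigma(E)$ (computed in $\Phi$) equals the union $\sigma_1(E_1) \cup \sigma_2(E_2)$ of the supports computed in $\Phi_1$ and $\Phi_2$. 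This is the substantive point, though it is not deep.

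The key observation is that when $\Phi = \Phi_1 \times \Phi_2$, the simple system is $\Pi = \Pi^{(1)} \sqcup \Pi^{(2)}$ (the disjoint union of the simple systems of the factors, with the index set $I = I_1 \sqcup I_2$), and every positive root $\alpha \in \Phi^+$ lies entirely in one of the two factors: either $\alpha \in \Phi_1^+$ or $\alpha \in \Phi_2^+$. Consequently the support of $\alpha$, being the smallest $\Pi_J$ with $\alpha \in \Phi_J^+$, is contained entirely in $\Pi^{(1)}$ or $\Pi^{(2)}$ accordingly, and it coincides with the support of $\alpha$ computed within that factor. Taking unions over the elements of $E = E_1 \cup E_2$ with $E_1 \subseteq \Phi_1^+$ and $E_2 \subseteq \Phi_2^+$ gives $\sigma(E) = \sigma_1(E_1) \cup \sigma_2(E_2)$, which is exactly the carrier of $E_1 \cup E_2$ in the join subdivision $\Gamma(\Phi_1) \ast \Gamma(\Phi_2)$ as described in Section~\ref{subsec:sub}.

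Finally I would note that the two structures being compared are each simplicial subdivisions of the same simplex $2^\Pi = 2^{\Pi^{(1)}} \ast 2^{\Pi^{(2)}}$ (for $\Gamma(\Phi)$ by Proposition~\ref{prop:csub}, for the join by the discussion of joins of subdivisions in Section~\ref{subsec:sub}), and a simplicial subdivision of a simplex is determined by its underlying complex together with its carrier map; since both agree, we conclude $\Gamma(\Phi) = \Gamma(\Phi_1) \ast \Gamma(\Phi_2)$. The main obstacle, such as it is, is purely bookkeeping: one must be careful that the root subsystem $\Phi_J$ for $J \subseteq I_1$ computed inside $\Phi$ really is the parabolic subsystem $(\Phi_1)_J$ computed inside $\Phi_1$, which is immediate from the direct-product structure, and that the notion of ``smallest'' support is unambiguous, which follows because supports are closed under intersection. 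No genuinely hard step arises.
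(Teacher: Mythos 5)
Your proposal is correct and follows exactly the route the paper takes: the paper's proof is a one-line appeal to Proposition~\ref{prop:cback}~(v) together with the definitions of the cluster subdivision and of the join of subdivisions, and your write-up simply spells out the details (agreement of the underlying complexes and of the carrier maps) that the paper leaves implicit.
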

\begin{proof}
This statement follows from Proposition~\ref{prop:cback}~(v) and the
definitions of the cluster subdivision and the join of two
simplicial subdivisions.
\end{proof}

\subsection{Noncrossing partitions} \label{subsec:nc}
This section summarizes those concepts and results from the theory
of noncrossing partitions which are involved in the statements and
proofs of Theorems~\ref{thm:clocalh} and \ref{thm:clocalg}.

The set of noncrossing partitions of $\{1, 2,\dots,n\}$, which we
will denote by $\nc^A (n)$, was introduced and studied by Kreweras
\cite{Kr72}. It consists of all set partitions $\pi$ of $\{1,
2,\dots,n\}$ with the following property: if $a < b < c < d$ are
such that $a, c$ are contained in a block $B$ of $\pi$ and $b, d$
are contained in a block $B'$ of $\pi$, then $B=B'$. An example of
a noncrossing partition  for $n = 9$ is shown on
Figure~\ref{fig:nc9ex}. Among several other fundamental results,
Kreweras \cite[Section~4]{Kr72} showed that the cardinality of
$\nc^A (n)$ is equal to the $n$th Catalan number $\frac{1}{n+1}
\binom{2n}{n}$ and that

  \begin{equation} \label{eq:rankgenA}
    \sum_{\pi \in \nc^A (n)} x^{n - |\pi|} \ = \ \sum_{i=0}^n \,
    \frac{1}{i+1} \binom{n}{i} \binom{n-1}{i} x^i.
  \end{equation}

  \begin{figure}
  \epsfysize = 0.7 in \centerline{\epsffile{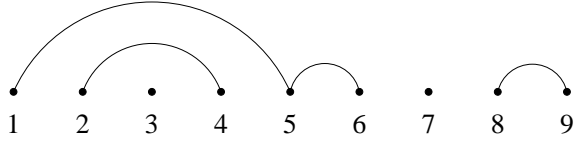}}
  \caption{The noncrossing partition $\{ \{1, 5, 6\}, \{2, 4\}, \{3\}, \{7\},
  \{8, 9\} \}$.}
  \label{fig:nc9ex}
  \end{figure}

We will say that a singleton block $\{b\}$ of $\pi \in \nc^A (n)$
is \emph{nested} if some block of $\pi$ contains elements $a$ and
$c$ such that $a < b < c$; otherwise we say that $\{b\}$ is
\emph{nonnested}. For the example of Figure~\ref{fig:nc9ex} the
singleton block $\{3\}$ is nested, while $\{7\}$ is not. Clearly,
a partition $\pi \in \nc^A (n)$ with nonnested singleton block
$\{b\}$ is determined by its restrictions to $\{1, 2,\dots,b-1\}$
and $\{b+1,\dots,n\}$, which are again noncrossing partitions.

Noncrossing partitions of type $B$ were defined by Reiner
\cite{Re97} as follows. A set partition $\pi$ of $\{1, 2,\dots,n\}
\cup \{-1, -2,\dots,-n\}$ is called a \textit{$B_n$-partition} if
the following conditions hold: (a) if $B$ is a block of $\pi$,
then $-B$ (the set obtained by negating the elements of $B$) is
also a block of $\pi$; and (b) there is at most one block of $\pi$
(called the \textit{zero block}, if present) which contains both
$i$ and $-i$ for some $i \in \{1, 2,\dots,n\}$. Such a partition
can be represented pictorially \cite[Section~2]{Ath98} by placing
the integers $1, 2,\dots,n, -1, -2,\dots,-n$ (in this order) along
a line and drawing arcs above the line between $i$ and $j$
whenever $i$ and $j$ lie in the same block $B$ of $\pi$ and no
other element between them does. The $B_n$-partition $\pi$ is
called \textit{noncrossing} if no two arcs in this diagram cross.
An example for $n = 7$ appears in Figure~\ref{fig:ncBex}. The set
of noncrossing $B_n$-partitions will be denoted by $\nc^B (n)$.

  \begin{figure}
  \epsfysize = 0.8 in \centerline{\epsffile{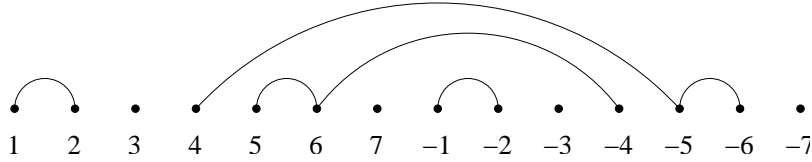}}
  \caption{A $B_7$-noncrossing partition.}
  \label{fig:ncBex}
  \end{figure}

We will be interested in the enumeration of noncrossing
$B_n$-partitions with no zero block, by the number of blocks.
Although we have not been able to locate the following statement
explicitly in the literature, its proof follows easily from that
of \cite[Theorem~2.3]{Ath98}.

\begin{lemma} \label{lem:ncB}
The number of partitions $\pi \in \nc^B (n)$ which have no zero
block and a total of $k$ pairs $\{B, -B\}$ of nonzero blocks is
equal to $\binom{n}{k} \binom{n-1}{k-1}$.
\end{lemma}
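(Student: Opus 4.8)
The plan is to prove Lemma~\ref{lem:ncB} by setting up a bijection (or an equivalent generating-function identity) between noncrossing $B_n$-partitions with no zero block and a combinatorial model that is manifestly counted by $\binom{n}{k}\binom{n-1}{k-1}$ when restricted to $k$ pairs of nonzero blocks. Since the statement is flagged as following easily from the proof of \cite[Theorem~2.3]{Ath98}, the first thing I would do is recall that argument: \cite[Theorem~2.3]{Ath98} computes the rank generating function (or block-number generating function) of all of $\nc^B(n)$, typically by a transfer-matrix or a ``cyclic sieving''-style direct count that classifies a $B_n$-noncrossing partition by its zero block together with a noncrossing partition of the complementary linearly ordered set. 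The key observation is that forbidding the zero block removes exactly one ingredient from that classification, and the remaining data is a noncrossing partition of a linear order of size $n$ paired with a choice of how its blocks are ``signed'' around the annular/linear diagram.

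Concretely, here is the route I would take. A partition $\pi\in\nc^B(n)$ with no zero block is, by the antipodal symmetry $B\mapsto -B$, determined by giving the ``top half'' of its arc diagram: a noncrossing partition structure on the $2n$ points $1,\dots,n,-1,\dots,-n$ with no arc joining $i$ to $-i$ and no block meeting its own negative. I would use the standard correspondence (as in \cite{Ath98}) sending such a $\pi$ to a pair consisting of (i) a subset $S\subseteq\{1,\dots,n\}$ recording which of the points $1,\dots,n$ are ``block minima'' in the appropriate cyclic sense, and (ii) a noncrossing partition of a set determined by $S$; the no-zero-block condition forces a fixed cyclic/linear structure so that the count of $\pi$ with exactly $k$ pairs of nonzero blocks becomes $\binom{n}{k}$ (choice of the $k$ ``starting points'' among $1,\dots,n$) times the number of noncrossing partitions of an $(n-1)$-element linear order into $k$ blocks, the latter being the Narayana number $\frac{1}{k}\binom{n-1}{k-1}\binom{n-1}{k}$ — and then one checks $\binom{n}{k}\cdot\frac{1}{k}\binom{n-1}{k-1}\binom{n-1}{k}$ does \emph{not} obviously equal $\binom{n}{k}\binom{n-1}{k-1}$, which tells me the correct model pairs a $k$-subset of an $n$-set with a $k$-subset of an $(n-1)$-set rather than with a Narayana-counted object. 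So the cleaner plan is: show directly that choosing $\pi$ amounts to choosing which $k$ of the points $1,\dots,n$ are left endpoints of the ``outermost'' arcs (that is $\binom{n}{k}$ ways) and then choosing a noncrossing \emph{linear} matching-type structure on the remaining points that is in bijection with $(k-1)$-subsets of an $(n-1)$-set, giving $\binom{n-1}{k-1}$.

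The steps, in order, would be: first, reduce to the half-diagram description of $\nc^B(n)$ with no zero block and verify the claimed symmetry-breaking (that antipodal symmetry plus ``no zero block'' lets one read off all of $\pi$ from arcs among $1,\dots,n,-1,\dots,-n$ lying in a fundamental domain); second, recall verbatim the bijection from the proof of \cite[Theorem~2.3]{Ath98} that encodes a $B_n$-noncrossing partition by a zero block plus ordinary noncrossing data, and specialize it to an empty zero block; third, extract from that specialization the two independent binomial choices and tally them as $\binom{n}{k}\binom{n-1}{k-1}$; fourth, do a small sanity check for $n=1,2,3$ against known values (for $n=1$ the only no-zero-block $B_1$-partition is $\{\{1\},\{-1\}\}$ with $k=1$, and $\binom{1}{1}\binom{0}{0}=1$; for $n=2$ we should get $\binom{2}{1}\binom{1}{0}=2$ partitions with one pair of blocks and $\binom{2}{2}\binom{1}{1}=1$ with two pairs, total $3$, which should match the count of $\nc^B(2)$ minus the partitions with a zero block). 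I expect the main obstacle to be bookkeeping the ``no zero block'' constraint inside the \cite{Ath98} bijection cleanly: one must be sure that excluding the zero block does not secretly remove more than a single degree of freedom and that the surviving data genuinely factors as a product of two binomial choices, rather than as a Narayana number; getting this factorization right — equivalently, identifying which $k$-subset of $\{1,\dots,n\}$ and which $(k-1)$-subset of an $(n-1)$-set the partition determines — is the crux, and everything else is routine verification.
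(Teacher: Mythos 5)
Your overall strategy coincides with the paper's: both rest on the encoding established in the proof of \cite[Theorem~2.3]{Ath98}, specialized to partitions with no zero block. The difference is that the paper states precisely what that encoding is, whereas your proposal does not: you explicitly defer ``the crux'' --- identifying the two independent binomial choices --- and the two candidate models you float are, respectively, wrong (the Narayana-count model, which you correctly reject after noticing the mismatch) and unsubstantiated (the ``left endpoints of outermost arcs'' together with a ``$(k-1)$-subset of an $(n-1)$-set,'' which has the right cardinality but is not derived from anything). So as written the argument has a hole exactly where the content lies.

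The missing ingredient is the following fact, which is what the paper extracts from \cite[Section~4]{Ath98}: the partitions $\pi \in \nc^B(n)$ with no zero block and $k$ pairs $\{B,-B\}$ of nonzero blocks are in one-to-one correspondence with pairs $(S,f)$, where $S$ is a $k$-element subset of $\{1,2,\dots,n\}$ and $f: S \to \{1,2,\dots\}$ is a map whose values sum to $n$ --- that is, a composition of $n$ into $k$ positive parts indexed by $S$ (the parts record block sizes). Once this is in hand the count is immediate: $\binom{n}{k}$ choices of $S$ times $\binom{n-1}{k-1}$ compositions of $n$ into $k$ positive parts. Your small-$n$ sanity checks are correct, and your instinct that the second factor must come from compositions (equivalently, subsets of an $(n-1)$-set) rather than from a Narayana number is sound; but to complete the proof you must either quote the $(S,f)$ encoding from \cite{Ath98} explicitly or construct your proposed arc-based bijection in detail, and neither is done in the proposal.
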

\begin{proof}
The proof of \cite[Theorem~2.3]{Ath98}, given in
\cite[Section~4]{Ath98}, shows that the partitions $\pi \in \nc^B
(n)$ which have no zero block and a total of $k$ pairs $\{B, -B\}$
of nonzero blocks are in one-to-one correspondence with pairs $(S,
f)$, where $S$ is a $k$-element subset of $\{1, 2,\dots,n\}$ and
$f: S \to \{1, 2,\dots\}$ is a map whose values sum to $n$. Since
there are $\binom{n}{k}$ ways to choose $S$ and, for any such
choice, there are $\binom{n-1}{k-1}$ ways to choose $f$, the
result follows.
\end{proof}

\medskip
We will say that a singleton block $\{b\}$ of $\pi \in \nc^B (n)$ is \emph{nested}
if some block of $\pi$ contains an element which precedes $b$ and one which succeeds
$b$ in the linear ordering $1, 2,\dots,n, -1, -2,\dots,-n$; otherwise we say that
$\{b\}$ is \emph{nonnested}. The example of Figure~\ref{fig:ncBex} has the
nonnested positive singleton block $\{3\}$ and the nested positive singleton block
$\{7\}$. A partition $\pi \in \nc^B (n)$ with nonnested positive singleton block
$\{b\}$ is determined by its restrictions to $\{1, 2,\dots,b-1\}$ and
$\{b+1,\dots,n\} \cup \{-b-1,\dots,-n\}$, which are noncrossing partitions of
types $A$ and $B$, respectively.

\section{Proofs for cluster subdivisions}
\label{sec:cproof}

This section provides proofs for Theorems~\ref{thm:clocalh} and
\ref{thm:clocalg} and Corollary~\ref{cor:clocalg}. As part of the
proof for the types $A_n$ and $B_n$, combinatorial interpretations
similar to those of Theorem~\ref{thm:clocalh} for the numbers
$\ell_i (\Phi)$ are provided for the numbers $\xi_i (\Phi)$.

As in previous sections, $\Phi = \Phi_I$ will be a finite root system of rank $n$.
We will denote by $\dD(\Phi)$ the Dynkin diagram of $\Phi$ and identify the vertex
set of $\dD(\Phi)$ with the $n$-element index set $I$. We will first treat the root
systems of types $A_n$, $B_n$ and $D_n$.

\subsection{The root system $A_n$} \label{subsec:A}
The following proposition is the main result of this section. Note
that noncrossing partitions with no singleton block and given
number of blocks, which appear there, were considered and
enumerated by Kreweras \cite[Section~5]{Kr72}.

\begin{proposition} \label{prop:An}
For the root system $\Phi$ of type $A_n$ the following hold:
  \begin{itemize}
    \item[$\bullet$] $\ell_i (\Phi)$ is equal to the number of partitions $\pi \in
                     \nc^A(n)$ with $i$ blocks, such that every singleton block of
                     $\pi$ is nested,
    \item[$\bullet$] $\xi_i (\Phi)$ is equal to the number of partitions $\pi \in
                     \nc^A(n)$ which have no singleton block and a total of $i$
                     blocks.
  \end{itemize}
Moreover, we have the explicit formula
  \begin{equation} \label{eq:xiA}
    \xi_i (\Phi) \ = \ \begin{cases}
    0, & \text{if \ $i=0$} \\
    & \\
    \displaystyle \frac{1}{n-i+1} \binom{n}{i} \binom{n-i-1}{i-1}, &
    \text{if \ $1 \le i \le \lfloor n/2 \rfloor$}. \end{cases}
  \end{equation}
\end{proposition}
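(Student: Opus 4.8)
The plan is to establish the three assertions in the order stated, starting from formula (\ref{eq:clocalh}).

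\emph{The formula for $\ell_i(\Phi)$.} For $\Phi$ of type $A_n$ one identifies $I$ with the vertex set $\{1, 2,\dots,n\}$ of the path that is the Dynkin diagram $\dD(\Phi)$; then for $J \subseteq I$ the parabolic subsystem $\Phi_J$ is the direct product of root systems of type $A_{|c|}$, one for each connected component $c$ of $J$ (as an induced subgraph). By Proposition~\ref{prop:cback}~(v) and multiplicativity of the $h$-polynomial under join, $h(\Delta_+(\Phi_J), x) = \prod_c h(\Delta_+(A_{|c|}), x)$; using Lemma~\ref{lem:h+} and (\ref{eq:rankgenA}) to write each factor as $\sum_{\pi_c} x^{|c| - |\pi_c|}$, where $\pi_c$ ranges over noncrossing partitions of the interval $c$, equation (\ref{eq:clocalh}) becomes a signed sum over all pairs $(J, (\pi_c)_c)$ of the monomial $(-1)^{n - |J|} x^{|J| - \sum_c |\pi_c|}$. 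First I would set up a bijection sending $(J, (\pi_c)_c)$ to the pair $(\widehat\pi, T)$, where $\widehat\pi \in \nc^A(n)$ is obtained from $\bigcup_c \pi_c$ by adjoining each element of $T := I \sm J$ as a singleton block. The verification needed is that $\widehat\pi$ is indeed noncrossing, that every element of $T$ becomes a \emph{nonnested} singleton block of $\widehat\pi$ (because no block of $\bigcup_c \pi_c$ straddles a gap of $J$), and conversely that deleting any subset $T$ of the nonnested singleton blocks of a partition $\widehat\pi \in \nc^A(n)$ returns a legitimate pair. Under this correspondence $|J| - \sum_c |\pi_c| = n - |\widehat\pi|$ and the sign is $(-1)^{|T|}$, so summing over $T$ kills every $\widehat\pi$ that has a nonnested singleton block, leaving $\ell_I(\Gamma(A_n), x) = \sum_{\widehat\pi} x^{n - |\widehat\pi|}$, the sum being over partitions $\widehat\pi \in \nc^A(n)$ whose every singleton block is nested. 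Since $\ell_I(\Gamma(\Phi), x)$ has symmetric coefficients \cite[Theorem~3.3]{Sta92}, one may replace $x^{n - |\widehat\pi|}$ by $x^{|\widehat\pi|}$, giving the first bullet.

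\emph{The formula for $\xi_i(\Phi)$.} Next I would show that every $\widehat\pi$ as above arises from a unique singleton-free $\tau \in \nc^A(n)$ by the following operation: in each block $B$ of $\tau$ keep $\min B$ and $\max B$ and excise an arbitrary subset of the remaining $|B| - 2$ elements, each excised element becoming its own singleton block (necessarily nested, as it lies strictly between $\min B$ and $\max B$). The inverse reabsorbs each nested singleton $\{b\}$ of $\widehat\pi$ into the unique innermost block of $\widehat\pi$ whose span contains $b$; existence and uniqueness of this block follow from the laminar structure of block spans in a noncrossing partition. This correspondence converts $\sum_{\widehat\pi} x^{|\widehat\pi|}$ into $\sum_{\tau} x^{|\tau|} (1+x)^{n - 2|\tau|}$, the factor $(1+x)^{|B| - 2}$ recording the choice made on each block $B$. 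By the uniqueness of the local $\gamma$-polynomial, comparison with (\ref{eq:defclocalg}) identifies $\xi_i(\Phi)$ with the number of partitions in $\nc^A(n)$ having $i$ blocks and no singleton block, which is the second bullet; the closed form (\ref{eq:xiA}) is then precisely Kreweras' enumeration \cite[Section~5]{Kr72} of such partitions (with $\xi_0(\Phi) = 0$, and $\xi_i(\Phi) = 0$ for $i > \lfloor n/2 \rfloor$, since a singleton-free partition of $\{1,\dots,n\}$ has at most $n/2$ blocks).

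\emph{Main difficulty.} The bookkeeping with signs and exponents is routine; the real content is in the two bijections. The hard part will be verifying that they are well defined --- that adjoining the elements of $T$ as singletons creates no crossing, that the components of $I \sm T$ interact with the blocks of $\widehat\pi$ exactly as claimed, and, most delicately, that the ``innermost block whose span contains $b$'' is uniquely determined, so that the reabsorption map inverse to the excision operation is canonical. Should the second bijection prove awkward to formalize directly, an alternative is to derive, from the standard decomposition of a noncrossing partition according to the block containing its least element, the algebraic functional equation satisfied by $\sum_{n \ge 0} \ell_I(\Gamma(A_n), x)\, t^n$, and to check that $\sum_{n \ge 0} t^n \sum_{\tau} x^{|\tau|}(1+x)^{n - 2|\tau|}$ satisfies the same equation with the same constant term.
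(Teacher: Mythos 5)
Your proposal is correct and takes essentially the same route as the paper: the first bullet is obtained by exactly the same inclusion--exclusion over $J$ (the paper phrases your bijection as the identity $h(\Delta_+(\Phi_J),x)=\sum_{\pi\in\nc^A(J)}x^{n-|\pi|}$, where $\nc^A(J)$ consists of partitions whose singletons at $I\sm J$ are nonnested), and your excision/reabsorption correspondence is precisely the paper's equivalence relation on $\nc^A_0(n)$ whose classes each contain a unique singleton-free partition and contribute $x^i(1+x)^{n-2i}$, with the closed formula likewise taken from Kreweras.
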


\medskip
For the first few values of $n$ we have

 $$ \sum_{i=0}^n \, \ell_i (\Phi) x^i \ = \ \begin{cases}
    0, & \text{if \ $n = 1$} \\
    x, & \text{if \ $n = 2$} \\
    x + x^2, & \text{if \ $n = 3$} \\
    x + 4x^2 + x^3, & \text{if \ $n = 4$} \\
    x + 8x^2 + 8x^3 + x^4, & \text{if \ $n = 5$} \\
    x + 13x^2 + 29x^3 + 13x^4 + x^5, & \text{if \ $n = 6$} \\
    x + 19x^2 + 73x^3 + 73x^4 + 19x^5 + x^6, & \text{if \ $n = 7$} \\
    x + 26x^2 + 151x^3 + 266x^4 + 151x^5 + 26x^6 + x^7, & \text{if \ $n = 8$}
    \end{cases} $$
and

 $$ \sum_{i=0}^{\lfloor n/2 \rfloor} \, \xi_i (\Phi) x^i \ = \ \begin{cases}
    0, & \text{if \ $n=1$} \\
    x, & \text{if \ $n = 2, 3$} \\
    x + 2x^2, & \text{if \ $n = 4$} \\
    x + 5x^2, & \text{if \ $n = 5$} \\
    x + 9x^2+ 5x^3, & \text{if \ $n = 6$} \\
    x + 14x^2 + 21x^3, & \text{if \ $n = 7$} \\
    x + 20x^2 + 56x^3 + 14x^4, & \text{if \ $n = 8$}.  \end{cases} $$

\medskip
The Dynkin diagram $\dD(\Phi)$ is a path on the vertex set $I$. For notational
convenience we set $I = \{1, 2,\dots,n\}$, where $i$ and $i+1$ are adjacent in
$\dD(\Phi)$ for $1 \le i \le n-1$.

\medskip
\noindent
\begin{proof}[Proof of Proposition~\ref{prop:An}] We need to compute the right-hand
side of (\ref{eq:clocalh}), so we focus on $h (\Delta_+ (\Phi_J),
x)$. Lemma~\ref{lem:h+} and Equation~(\ref{eq:rankgenA}) show that
  \begin{equation} \label{eq:AA}
    h (\Delta_+ (\Phi_I), x) \ = \ \sum_{\pi \in \nc^A (n)} x^{n - |\pi|}.
  \end{equation}
For general $J \subseteq I$ we have a direct product decomposition
$\Phi_J = \Phi_1 \times \cdots \times \Phi_k$ into irreducible
subsystems $\Phi_1,\dots,\Phi_k$. The Dynkin diagrams of
$\Phi_1,\dots,\Phi_k$ are the connected components of the diagram
obtained from $\dD (\Phi)$ by deleting the vertices in $I \sm J$.
Since $\dD (\Phi)$ is a path with no multiple edges, each $\Phi_i$
is again a root system of type $A$. Denoting by $p_i$ the rank of
$\Phi_i$ and using Proposition~\ref{prop:cback} (v) and
Equation~(\ref{eq:AA}) we find that
  \begin{align*}
    h (\Delta_+ (\Phi_J), x) & \ = \ h (\Delta_+ (\Phi_1) \ast \cdots \ast \Delta_+
    (\Phi_k), x) \ = \ \prod_{i=1}^k \ h (\Delta_+ (\Phi_i), x) \\
    & \ = \ \prod_{i=1}^k \ \sum_{\pi \in \nc^A (p_i)} x^{p_i - |\pi|} \ = \
    \sum_{\pi \in \nc^A (J)} x^{n - |\pi|},
  \end{align*}
where $\nc^A (J)$ denotes the set of partitions $\pi \in \nc^A (n)$ such that
$\{a\}$ is a nonnested singleton of $\pi$ for every $a \in I \sm J$. The previous
computation and (\ref{eq:clocalh}) imply that
  \begin{equation} \label{eq:1An}
    \sum_{i=0}^n \, \ell_i (\Phi) x^i \ = \ \sum_{J \subseteq I} \ (-1)^{|I \sm J|}
    \sum_{\pi \in \nc^A (J)} x^{n - |\pi|}.
  \end{equation}
A simple application of the principle of inclusion-exclusion shows that the
right-hand side of (\ref{eq:1An}) is equal to the sum of $x^{n - |\pi|}$, where
$\pi$ runs through those partitions in $\nc^A(n)$ which have no nonnested singleton
block. This result and the fact that $\ell_i (\Phi) = \ell_{n-i} (\Phi)$ yield the
desired interpretation for $\ell_i (\Phi)$.

To prove the interpretation claimed for $\xi_i (\Phi)$ we need to show that
  \begin{equation} \label{eq:2An}
    \sum_{i=0}^n \, \ell_i (\Phi) x^i \ = \ \sum_{i=0}^{\lfloor n/2 \rfloor}
    \, m_i \, x^i (1+x)^{n-2i},
  \end{equation}
where $m_i$ is the number of partitions $\pi \in \nc^A(n)$ with a
total of $i$ blocks, none of which is a singleton. Let us denote
by $\nc_0^A (n)$ the subset of $\nc^A (n)$ consisting of those
noncrossing partitions, every singleton block of which is nested.
We define an equivalence relation on $\nc^A (n)$ by declaring two
partitions $\pi_1$ and $\pi_2$ equivalent if there is a one-to-one
correspondence, say $f$, from the set of nonsingleton blocks of
$\pi_1$ to the set of nonsingleton blocks of $\pi_2$ such that for
every nonsingleton block $B$ of $\pi_1$ the sets $B$ and $f(B)$
have the same minimum and the same maximum element. For example,
the partition in Figure~\ref{fig:nc9ex} is equivalent to a total
of four noncrossing partitions, namely itself, $\{ \{1, 5, 6\},
\{2, 3, 4\}, \{7\}, \{8, 9\} \}$, $\{ \{1, 6\}, \{2, 3, 4\},
\{5\}, \{7\}, \{8, 9\} \}$ and $\{ \{1, 6\}, \{2, 4\}, \{3\},
\{5\}, \{7\}, \{8, 9\} \}$.

We leave it to the reader to check that this relation restricts to an equivalence
relation on $\nc_0^A (n)$ and that each equivalence class within $\nc_0^A (n)$
contains a unique partition $\pi_0$ having no singleton block. Moreover, for the
equivalence class $O(\pi_0)$ of such a partition $\pi_0 \in \nc_0^A (n)$ we have
  $$ \sum_{\pi \in O(\pi_0)} x^{|\pi|} \ = \ x^i (1+x)^{n-2i}, $$
where $i$ is the number of blocks of $\pi_0$. Summing the previous equation over
all elements $\pi_0 \in \nc_0^A (n)$ which have no singleton block we get
(\ref{eq:2An}).

Finally, (\ref{eq:xiA}) is a consequence of the equality $\xi_i (\Phi) = m_i$ and
the results of \cite[p.~344]{Kr72}, which enumerate noncrossing partitions with
no singleton block and given number of blocks.
\end{proof}

\subsection{The root system $B_n$} \label{subsec:B}
This section proves the following statement on the case $\xX = B_n$.

\begin{proposition} \label{prop:Bn}
For the root system $\Phi$ of type $B_n$ the following hold:
  \begin{itemize}
    \item[$\bullet$] $\ell_i (\Phi)$ is equal to the number of partitions $\pi \in
                     \nc^B(n)$ with no zero block and $i$ pairs $\{B, -B\}$
                     of nonzero blocks, such that every positive singleton block
                     of $\pi$ is nested,
    \item[$\bullet$] $\xi_i (\Phi)$ is equal to the number of partitions $\pi \in
                     \nc^B (n)$ which have no zero block, no singleton block
                     and a total of $i$ pairs $\{B, -B\}$ of nonzero blocks.
  \end{itemize}
Moreover, we have the explicit formula
  \begin{equation} \label{eq:xiB}
    \xi_i (\Phi) \ = \ \begin{cases}
    0, & \text{if \ $i=0$} \\
    & \\
    \displaystyle \binom{n}{i} \binom{n-i-1}{i-1}, &
    \text{if \ $1 \le i \le \lfloor n/2 \rfloor$}. \end{cases}
  \end{equation}
\end{proposition}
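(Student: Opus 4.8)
The plan is to mirror, as closely as possible, the argument used in the proof of Proposition~\ref{prop:An}, replacing the type-$A$ input by the type-$B$ input. The Dynkin diagram $\dD(\Phi)$ of $B_n$ is a path $1 - 2 - \cdots - n$, with the edge $\{n-1,n\}$ the only multiple edge. First I would record the type-$B$ analogue of (\ref{eq:AA}): combining Lemma~\ref{lem:h+} with Lemma~\ref{lem:ncB}, we get
$$
 h(\Delta_+(\Phi_I),x) \ = \ \sum_{k=0}^n \binom{n}{k}\binom{n-1}{k-1} x^{k}
 \ = \ \sum_{\pi} x^{\,k(\pi)},
$$
where the last sum ranges over all $\pi \in \nc^B(n)$ with no zero block and $k(\pi)$ pairs of nonzero blocks (with the convention that the all-singletons partition contributes $x^0$). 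The key structural point is that, for $J \subseteq I$, the standard parabolic $\Phi_J$ decomposes as a direct product of irreducibles whose Dynkin diagrams are the connected components of $\dD(\Phi)$ with the vertices of $I \sm J$ deleted; each such component is a path, hence of type $A$, \emph{except possibly} the component containing vertex $n$, which is of type $B_m$ when it contains both $n-1$ and $n$, of type $A$ otherwise (it is $B_1 = A_1$ when it is just $\{n\}$). Using Proposition~\ref{prop:cback}~(v) together with (\ref{eq:AA}) and the displayed type-$B$ identity, one obtains
$$
 h(\Delta_+(\Phi_J),x) \ = \ \sum_{\pi \in \nc^B(J)} x^{\,k(\pi)},
$$
where $\nc^B(J)$ is the set of $\pi \in \nc^B(n)$ with no zero block such that $\{a\}$ is a nonnested singleton block for every $a \in I \sm J$ (the parabolic structure forces exactly the "nonnested positive singleton" splitting described at the end of Section~\ref{subsec:nc}, and negating gives the companion negative singleton for free).

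Next I would substitute this into (\ref{eq:clocalh}) to get
$$
 \sum_{i=0}^n \ell_i(\Phi) x^i \ = \ \sum_{J \subseteq I} (-1)^{|I\sm J|}
 \sum_{\pi \in \nc^B(J)} x^{\,k(\pi)},
$$
and then apply inclusion–exclusion exactly as in the type-$A$ case: the right-hand side collapses to $\sum_\pi x^{k(\pi)}$ over those $\pi \in \nc^B(n)$ with no zero block and no nonnested positive singleton block, i.e.\ every positive singleton nested. Together with the symmetry $\ell_i(\Phi) = \ell_{n-i}(\Phi)$ from \cite{Sta92}, this yields the claimed interpretation of $\ell_i(\Phi)$. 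For the $\xi_i(\Phi)$ statement I would set up the same equivalence relation on $\nc^B(n)$: two such partitions are equivalent if there is a bijection between their sets of nonsingleton pairs $\{B,-B\}$ preserving, blockwise, the minimum and maximum in the linear order $1,\dots,n,-1,\dots,-n$. One checks this restricts to partitions with every positive singleton nested and that each class contains a unique representative $\pi_0$ with no singleton block; for such a $\pi_0$ with $i$ pairs of nonzero blocks the generating function over its class is $x^i(1+x)^{n-2i}$, since each of the $n - 2i$ "free" positive positions may be absorbed into a surrounding block or left as a nested singleton. Summing over all singleton-free $\pi_0$ gives $\ell_I(\Gamma(\Phi),x) = \sum_i m_i\, x^i(1+x)^{n-2i}$ with $m_i = \#\{\pi \in \nc^B(n): \text{no zero block, no singleton, } i \text{ pairs}\}$, hence $\xi_i(\Phi) = m_i$ by uniqueness in (\ref{eq:defclocalg}).

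Finally, for the explicit formula (\ref{eq:xiB}) I would compute $m_i$ directly: a $\pi \in \nc^B(n)$ with no zero block, no singleton block, and $i$ pairs of nonzero blocks corresponds, under the bijection from the proof of Lemma~\ref{lem:ncB}, to a pair $(S,f)$ with $S \subseteq \{1,\dots,n\}$ of size $i$ and $f\colon S \to \{2,3,\dots\}$ (values at least $2$, since no block is a singleton) summing to $n$; the number of such $f$ is $\binom{n-i-1}{i-1}$ by a standard stars-and-bars count, giving $m_i = \binom{n}{i}\binom{n-i-1}{i-1}$, and $m_0 = 0$ for $n \ge 1$. The main obstacle I anticipate is being careful with the "boundary" component of $\dD(\Phi)$ at the multiple edge: one must verify that when vertex $n-1$ is deleted but $n$ is kept, the resulting type-$A_1$ factor is still handled by (\ref{eq:AA}), and that the inclusion–exclusion and the equivalence-relation bookkeeping only ever involve the \emph{positive} singletons (their negatives being determined), so that no double-counting of the $\pm$ symmetry creeps in. Once that is pinned down, every step is a faithful translation of the type-$A$ argument.
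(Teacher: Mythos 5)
Your proposal follows the paper's proof essentially step for step (the type-$B$ analogue of (\ref{eq:AA}), the parabolic decomposition with one possible type-$B$ factor, inclusion--exclusion, a singleton-absorbing equivalence relation, and the $(S,f)$ bijection with values $\ge 2$ for the explicit formula), and the overall argument is sound. Two details need repair, however. First, your opening display is reversed: $h(\Delta_+(\Phi_I),x)=\sum_i\binom{n}{i}\binom{n-1}{i}x^i$ by Lemma~\ref{lem:h+}, which by Lemma~\ref{lem:ncB} equals $\sum_{\pi}x^{\,n-\|\pi\|}$, not $\sum_{\pi}x^{\|\pi\|}$ (these differ, since $h(\Delta_+(B_n),x)$ is not symmetric; e.g.\ $1+2x$ versus $2x+x^2$ for $n=2$). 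Carrying $x^{\,n-\|\pi\|}$ through is exactly why the symmetry $\ell_i(\Phi)=\ell_{n-i}(\Phi)$ must be invoked at the end, as you do; with your sign convention that invocation would be superfluous, so the chain of equalities as written is internally inconsistent even though the conclusion is right. Second, the equivalence relation cannot be defined in type $B$ by requiring a block-by-block bijection preserving minima and maxima in the linear order $1,\dots,n,-1,\dots,-n$: negation does not respect that order, so absorbing a nested positive singleton $b$ into $B$ forces absorbing $-b$ into $-B$, and this generally changes the maximum of $-B$ (e.g.\ passing from $\{-5,-6,4\}$ to $\{-5,-6,-7,4\}$ changes the maximum from $-6$ to $-7$). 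The paper instead defines the relation as the one generated by the moves $(\pi,b)\mapsto\pi'$ that replace $B,-B,\{b\},\{-b\}$ by $B\cup\{b\}$ and $(-B)\cup\{-b\}$; with that definition your count of $n-2i$ independently absorbable positive positions, and hence the factor $x^i(1+x)^{n-2i}$ per class, goes through as you describe.
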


\medskip
For the first few values of $n$ we have

 $$ \sum_{i=0}^n \, \ell_i (\Phi) x^i \ = \ \begin{cases}
    2x, & \text{if \ $n = 2$} \\
    3x + 3x^2, & \text{if \ $n = 3$} \\
    4x + 14x^2 + 4x^3, & \text{if \ $n = 4$} \\
    5x + 35x^2 + 35x^3 + 5x^4, & \text{if \ $n = 5$} \\
    6x + 69x^2 + 146x^3 + 69x^4 + 6x^5, & \text{if \ $n = 6$} \\
    7x + 119x^2 + 427x^3 + 427x^4 + 119x^5 + 7x^6, & \text{if \ $n = 7$} \\
    \end{cases} $$
and

 $$ \sum_{i=0}^{\lfloor n/2 \rfloor} \, \xi_i (\Phi) x^i \ = \ \begin{cases}
    2x, & \text{if \ $n = 2$} \\
    3x, & \text{if \ $n = 3$} \\
    4x + 6x^2, & \text{if \ $n = 4$} \\
    5x + 20x^2, & \text{if \ $n = 5$} \\
    6x + 45x^2+ 20x^3, & \text{if \ $n = 6$} \\
    7x + 84x^2 + 105x^3, & \text{if \ $n = 7$} \\
    8x + 140x^2 + 336x^3 + 70x^4, & \text{if \ $n = 8$}.  \end{cases} $$

\medskip
The Dynkin diagram $\dD(\Phi)$ is a path on the vertex set $I = \{1,
2,\dots,n\}$ with one double edge. We will assume that $i$ and $i+1$ are
adjacent in $\dD(\Phi)$ for $1 \le i \le n-1$ and that the double edge
connects vertices $n-1$ and $n$.

\medskip
\noindent
\begin{proof}[Proof of Proposition~\ref{prop:Bn}] A proof which parallels
that of Proposition~\ref{prop:An} can be given as follows. We
denote by $\nc_+^B (n)$ the set of partitions $\pi \in \nc^B (n)$
which do not have a zero block. To compute the right-hand side of
(\ref{eq:clocalh}), we consider $h (\Delta_+ (\Phi_J), x)$ for $J
\subseteq I$. Lemmas~\ref{lem:h+} and \ref{lem:ncB}, together with
some straightforward computations, show that
  \begin{equation} \label{eq:BB}
    h (\Delta_+ (\Phi_I), x) \ = \ \sum_{\pi \in \nc_+^B (n)} x^{n - \|\pi\|},
  \end{equation}
where $\|\pi\|$ stands for the number of pairs $\{B, -B\}$ of (nonzero) blocks
of $\pi$. For general $J \subseteq I$ we claim that
 \begin{equation} \label{eq:nc+B}
   h (\Delta_+ (\Phi_J), x) \ = \ \sum_{\pi \in \nc_+^B (J)} x^{n - \|\pi\|},
 \end{equation}
where $\nc_+^B (J)$ denotes the set of partitions $\pi \in \nc_+^B (n)$ such
that $\{a\}$ is a nonnested (positive) singleton block of $\pi$ for every $a
\in I \sm J$. Given (\ref{eq:nc+B}), the first statement follows by an
application of inclusion-exclusion, as in the type $A_n$ case.

The proof of (\ref{eq:nc+B}) proceeds without essential change if
$n-1$ or $n$ does not belong to $J$. Otherwise we have $\{n-1, n\}
\subseteq J$ and the argument in the proof of
Proposition~\ref{prop:An} should be modified as follows. Let $b$
denote the maximum element of $I \sm J$. Then $\{b+1,\dots,n\}$ is
the vertex set of the Dynkin diagram of one of the irreducible
components, say $\Phi_k$, of $\Phi_J$. This component is of type
$B$, while each of $\Phi_1,\dots,\Phi_{k-1}$ is of type $A$.
Moreover, given $\pi \in \nc_+^B (J)$, the restriction of $\pi$ on
$\{b+1,\dots,n\} \cup \{-b-1,\dots,-n\}$ is a noncrossing
partition of type $B$, while that on the vertex set of the Dynkin
diagram of each of $\Phi_1,\dots,\Phi_{k-1}$ is a noncrossing
partition of type $A$. Thus (\ref{eq:nc+B}) follows by the
computation in proof of Proposition~\ref{prop:An} and the use of
(\ref{eq:AA}) and (\ref{eq:BB}).

For the second statement, we need to replace the equivalence
relation on $\nc^A (n)$ by one on $\nc_+^B (n)$, defined as
follows. Suppose that $\pi \in \nc_+^B (n)$ has a nested positive
singleton block $\{b\}$. Then there is a unique block $B \in \pi$
such that replacing the blocks $B, -B, \{b\}$ and $\{-b\}$ of
$\pi$ by the unions $B \cup \{b\}$ and $(-B) \cup \{-b\}$ results
in a noncrossing partition $\pi' \in \nc_+^B (n)$. The required
equivalence relation on $\nc_+^B (n)$ is defined as the finest
equivalence relation under which $\pi$ and $\pi'$ are equivalent
for all such pairs $(\pi, b)$. For example, the partition in
Figure~\ref{fig:ncBex} is equivalent to exactly one other
noncrossing partition, of which $\{5, 6, 7, -4\}$ is a block. The
proof then proceeds as in the type $A_n$ case with only trivial
adjustments; the details are left to the reader.

Finally, to deduce the explicit formula (\ref{eq:xiB}) we argue as
in the proof of Lemma~\ref{lem:ncB}. The proof of
\cite[Theorem~2.3]{Ath98} shows that the partitions $\pi \in \nc^B
(n)$ which have no zero block, no singleton block and a total of
$i$ pairs $\{B, -B\}$ of nonzero blocks are in one-to-one
correspondence with pairs $(S, f)$, where $S$ is an $i$-element
subset of $\{1, 2,\dots,n\}$ and $f: S \to \{2, 3,\dots\}$ is a
function whose values sum to $n$. Clearly, the number of such
pairs is given by the right-hand side of (\ref{eq:xiB}) and the
proof follows.
\end{proof}

\subsection{The root system $D_n$} \label{subsec:D}
This section proves the following part of
Theorems~\ref{thm:clocalh} and \ref{thm:clocalg}.

\begin{proposition} \label{prop:Dn}
For the root system $\Phi$ of type $D_n$ we have:

  \begin{align*}
    \ell_i (\Phi) & \ = \ (n-2) \cdot \, \# \, \{\pi \in \nc^A (n-1): |\pi| = i\} \\
    &  \\
    & \ = \ \begin{cases}
    0, & \text{if \ $i=0$} \\
    & \\
    \displaystyle \frac{n-2}{i} \binom{n-1}{i-1} \binom{n-2}{i-1}, &
    \text{if \ $1 \le i \le n$}, \end{cases}
  \end{align*}
and

  $$ \xi_i (\Phi) \ = \ \begin{cases}
    0, & \text{if \ $i=0$} \\
    & \\
    \displaystyle \frac{n-2}{i} \binom{2i-2}{i-1} \binom{n-2}{2i-2}, &
    \text{if \ $1 \le i \le \lfloor n/2 \rfloor$}. \end{cases} $$
\end{proposition}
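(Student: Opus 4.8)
The plan is to evaluate the alternating sum (\ref{eq:clocalh}) for $\Phi$ of type $\xX = D_n$, reduce it entirely to the type $A$ data furnished by Proposition~\ref{prop:An}, and then pass from the $\ell_i(\Phi)$ to the $\xi_i(\Phi)$ via (\ref{eq:defclocalg}) and the known $\gamma$-expansion of the Narayana polynomial. Throughout I fix the standard labelling in which $\dD(\Phi)$ is the path $1 - 2 - \cdots - (n-2)$ with two further vertices $n-1$ and $n$ each joined to $n-2$, so that $n-2$ is the unique trivalent vertex, and I write $g_p(x)$ for the local $h$-polynomial of the type $A_p$ cluster subdivision (computed in Proposition~\ref{prop:An}, with $g_0(x) = 1$).

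First I would carry out the reduction to type $A$. For $J \subseteq I$ the diagram $\dD(\Phi)|_J$ obtained by deleting the vertices of $I \sm J$ has all of its connected components of type $A$, except that the component containing $n-2$ is of type $D$ precisely when $\{n-2, n-1, n\} \subseteq J$; hence, by Proposition~\ref{prop:cback}~(v) and Lemma~\ref{lem:h+}, $h(\Delta_+(\Phi_J), x)$ is a product of explicitly known factors $h(\Delta_+(A_p), x)$ and at most one factor $h(\Delta_+(D_m), x)$. I would split the sum (\ref{eq:clocalh}) according to whether $n-2 \in J$. The part with $n-2 \notin J$ vanishes, since the two leaves $n-1, n$ are then isolated or absent, contribute only trivial factors, and the resulting alternating sum over whether each of them lies in $J$ cancels. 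In the part with $n-2 \in J$, splitting further according to $\{n-1, n\} \cap J$ and to the first vertex $j$ of the path-prefix $\{j, j+1, \dots, n-2\}$ that forms part of the component of $n-2$, one finds that this component is an explicitly identified diagram of type $A$ or $D$ and that the sum over the remainder of $J$ inside $\{1, \dots, j-2\}$ is, by the type $A$ instance of (\ref{eq:clocalh}), a factor $g_{j-2}(x)$ (read as $1$ when $j = 1$). The result is a closed identity expressing $\ell_I(\Gamma(\Phi), x)$ as a combination of the polynomials $h(\Delta_+(D_m), x)$ and $h(\Delta_+(A_p), x)$ with coefficients among the $g_q(x)$.

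Next I would simplify this combination. Using the explicit formulas of Lemma~\ref{lem:h+}, the Kreweras/Narayana identities of Section~\ref{subsec:nc} (in particular (\ref{eq:rankgenA}) and its consequence (\ref{eq:AA})), and the formula for $g_p(x)$ from Proposition~\ref{prop:An} — most smoothly at the level of generating functions in an auxiliary variable tracking the rank — the combination telescopes to $\ell_I(\Gamma(\Phi), x) = (n-2) \sum_{\pi \in \nc^A(n-1)} x^{|\pi|}$. This is exactly the assertion $\ell_i(\Phi) = (n-2) \cdot \#\{\pi \in \nc^A(n-1): |\pi| = i\}$, and the closed form $\frac{n-2}{i}\binom{n-1}{i-1}\binom{n-2}{i-1}$ follows from the Narayana count of the partitions in $\nc^A(n-1)$ with $i$ blocks and an elementary rewriting of binomial coefficients; the required symmetry $\ell_i(\Phi) = \ell_{n-i}(\Phi)$ is automatic from the symmetry of the Narayana numbers. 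For the local $\gamma$-vector I would combine this identity with the known $\gamma$-expansion of the Narayana polynomial,
  $$ \sum_{\pi \in \nc^A (m)} x^{|\pi|} \ = \ \sum_{k \geq 1} \, \frac{1}{k} \binom{2k-2}{k-1} \binom{m-1}{2k-2} \, x^k (1+x)^{m+1-2k} $$
(equivalently, the standard fact that the $\gamma$-vector of the $(m-1)$-dimensional associahedron $\Delta(A_{m-1})$ has entries $\frac{1}{j+1}\binom{2j}{j}\binom{m-1}{2j}$). Taking $m = n-1$ and comparing with (\ref{eq:defclocalg}) gives $\xi_0(\Phi) = 0$ and $\xi_i(\Phi) = \frac{n-2}{i}\binom{2i-2}{i-1}\binom{n-2}{2i-2}$ for $1 \le i \le \lfloor n/2 \rfloor$, as claimed.

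The main obstacle is the first step. For the types $A_n$ and $B_n$ the Dynkin diagram is a path and the inclusion-exclusion over $J$ collapses cleanly through a single chain of nested noncrossing partitions; here one must instead track the branching at the trivalent vertex $n-2$ together with the two leaves $n-1, n$ at once, and organizing the bookkeeping so that the alternating sum genuinely reduces to the quantities $g_q(x)$ — and, in the following step, recognizing the resulting combination of binomial sums as $(n-2)\sum_{\pi \in \nc^A(n-1)} x^{|\pi|}$ — is the delicate part. After that, the passage to the explicit formulas is routine binomial manipulation.
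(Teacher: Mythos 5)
Your argument is sound and all of its checkable ingredients are right (the vanishing of the terms with $n-2\notin J$, since each isolated leaf contributes a factor $h(\Delta_+(A_1),x)-1=0$; the identification of the component of the trivalent vertex; the $\gamma$-expansion of the Narayana polynomial; and the final binomial rewritings), but it organizes the inclusion--exclusion differently from the paper. Writing $C_m(x)=\sum_{\pi\in\nc^A(m)}x^{m-|\pi|}$ and $\ell_m(x)$ for the type $D_m$ local $h$-polynomial, the paper conditions on the smallest vertex $r\notin J$ with $\{1,\dots,r-1\}\subseteq J$; this peels off a type $A_{r-1}$ prefix whose factor is the \emph{full} $h$-polynomial $C_{r-1}(x)$ and leaves an alternating sum over the residual $D_{n-r}$ diagram equal to $-\ell_{n-r}(x)$, yielding the recursion
\[ \ell_n(x)\ =\ h(\Delta_+(\Phi_I),x)\, -\, \ell_{n-1}(x)\, -\, \sum_{r=2}^{n-3}C_{r-1}(x)\,\ell_{n-r}(x)\, +\, C_{n-2}(x)\, -\, 2C_{n-1}(x), \]
which is then closed by induction on $n$. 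You instead condition on the component of the branch vertex $n-2$, factoring out the $h$-polynomial of that component and leaving the type $A$ \emph{local} $h$-polynomial $g_{j-2}(x)$ of the untouched prefix; this is the dual decomposition and produces a closed convolution rather than a recursion (I checked it reproduces $2x+6x^2+2x^3$ for $n=4$). Your version buys a direct formula at the price of carrying the $g_p(x)$ as coefficients; the paper's buys the simpler coefficients $C_{r-1}(x)$ at the price of an induction. The one step you should not underrate is the phrase ``the combination telescopes'': in the paper this is precisely the identity (\ref{eq:localhDC}), whose verification is the bulk of the proof (introduce $F(x,t)=\sum_n C_n(x)t^n$, use the quadratic equation (\ref{eq:F}) and its $t$-derivative, equate coefficients). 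Your analogous identity is of exactly the same kind and will yield to the same technique --- you will additionally need the generating function $\sum_p g_p(x)t^p$, which the type $A$ case of (\ref{eq:clocalh}) expresses through $F$ --- but it is a genuine computation that must be carried out, not a formality. The passage from $\ell_i(\Phi)$ to $\xi_i(\Phi)$ via the $\gamma$-expansion of the Narayana polynomial is identical to the paper's, which cites \cite[Proposition~11.14]{PRW08} for the same formula.
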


\bigskip
For the first few values of $n$ we have

 $$ \sum_{i=0}^n \, \ell_i (\Phi) x^i \ = \ \begin{cases}
    2x + 6x^2 + 2x^3, & \text{if \ $n = 4$} \\
    3x + 18x^2 + 18x^3 + 3x^4, & \text{if \ $n = 5$} \\
    4x + 40x^2 + 80x^3 + 40x^4 + 4x^5, & \text{if \ $n = 6$} \\
    5x + 75x^2 + 250x^3 + 250x^4 + 75x^5 + 5x^6, & \text{if \ $n = 7$}
    \end{cases} $$
and

 $$ \sum_{i=0}^{\lfloor n/2 \rfloor} \, \xi_i (\Phi) x^i \ = \ \begin{cases}
    2x + 2x^2, & \text{if \ $n = 4$} \\
    3x + 9x^2, & \text{if \ $n = 5$} \\
    4x + 24x^2 + 8x^3, & \text{if \ $n = 6$} \\
    5x + 50x^2 + 50x^3, & \text{if \ $n = 7$} \\
    6x + 90x^2 + 180x^3 + 30x^4, & \text{if \ $n = 8$}.  \end{cases} $$

\bigskip
One can easily deduce from Proposition~\ref{prop:Dn} a combinatorial interpretation
to the numbers $\xi_i (\Phi)$; see also \cite[Section~11.3]{PRW08}. We are not aware,
however, of one which is analogous to those in Propositions~\ref{prop:An} and
\ref{prop:Bn} for types $A_n$ and $B_n$.

The following notation and enumerative result will be used in the
proof of Proposition~\ref{prop:Dn}. We will write
   $$ C_n (x)  \ := \ \sum_{\pi \in \nc^A (n)} x^{|\pi| - 1} \ = \ \sum_{\pi \in
    \nc^A (n)} x^{n - |\pi|}
    \ = \ \sum_{i=0}^n \, \frac{1}{i+1} \binom{n}{i} \binom{n-1}{i}
    x^i $$
and
  \begin{equation} \label{eq:Fdef}
    F (x, t) \ := \ \sum_{n \ge 1} \, C_n (x) \, t^n \ = \ t + (1 + x) \,
    t^2 + (1 + 3x + x^2) \, t^3 + \cdots
  \end{equation}

\noindent Then (see, for instance, \cite[Equation~(11)]{PRW08} and
\cite[Exercise~6.36]{StaEC2}) we have
  \begin{equation} \label{eq:F}
    F(x, t) \ = \ xt \, F^2 (x, t) + (1 + x)t \, F(x, t) + t.
  \end{equation}

We will label the vertices of the Dynkin diagram $\dD(\Phi)$ so that $i$ and
$i+1$ are adjacent in $\dD(\Phi)$ for $1 \le i \le n-3$, while $n-2$ is adjacent
to both $n-1$ and $n$.

\medskip
\noindent
\begin{proof}[Proof of Proposition~\ref{prop:Dn}]
Let us write $\ell_n (x) := \ell_I (\Gamma (\Phi), x) = \sum_{i=0}^n \ell_i
(\Phi) x^i$ for $n \ge 4$. The proposed formula for $\ell_i (\Phi)$ is equivalent
to the equation
  \begin{equation} \label{eq:localhAD}
   \ell_n (x) \ = \ (n-2) \cdot x \, C_{n-1} (x).
  \end{equation}
The formula for $\xi_i (\Phi)$ follows from that and the known
explicit formula (see \cite[Proposition~11.14]{PRW08}) for the
$\gamma$-polynomial associated to $C_n (x)$. Thus, it suffices to
prove (\ref{eq:localhAD}).

We begin by rewriting the right-hand side of (\ref{eq:clocalh}) in
the following way. For $1 \le r \le n$, we will denote by $\jJ_r$
the collection of all subsets $J \subseteq I$ which contain $\{1,
2,\dots,r-1\}$ but do not contain $r$. Using
Proposition~\ref{prop:cback} (v) and the type $A_n$ case of
Lemma~\ref{lem:h+}, we find that
 $$ \sum_{J \in \jJ_r} \ (-1)^{|I \sm J|} \ h (\Delta_+ (\Phi_J), x) \ = \
    \begin{cases}
    - \, \ell_{n-1} (x), & \text{if \ $r = 1$} \\
    - \, C_{r-1}(x) \, \ell_{n-r} (x), & \text{if \ $2 \le r \le n-3$} \\
    0, & \text{if \ $r = n-2$}, \\
    C_{n-2}(x) - C_{n-1}(x), & \text{if \ $r = n-1$}, \\
    - \, C_{n-1}(x), & \text{if \ $r = n$}.
    \end{cases} $$
As a result, (\ref{eq:clocalh}) can be rewritten as
  $$\ell_n (x) \ = \ h (\Delta_+ (\Phi_I), x) \, - \, \ell_{n-1} (x) \, - \,
    \sum_{r=2}^{n-3} C_{r-1} (x) \, \ell_{n-r} (x) \, + \, C_{n-2} (x) \, - \, 2 C_{n-1}
    (x).$$
Thus, using induction on $n$, it suffices to prove that
  \begin{align} \label{eq:localhDC}
    h (\Delta_+ (\Phi_I), x) &= (n-2)x C_{n-1} (x) \, + \, (n-3)x C_{n-2} (x)
    \nonumber \\
    & \hspace{10pt} + \ \sum_{r=2}^{n-3} \, (n-r-2) x C_{r-1} (x) C_{n-r-1} (x) \, - \,
    C_{n-2} (x) \ + \ 2 C_{n-1} (x)
  \end{align}
for $n \ge 4$. Let $R_n (x)$ denote the right-hand side of (\ref{eq:localhDC})
and $S_n(x)$ denote the sum which appears there. It follows directly from
(\ref{eq:Fdef}) that
  $$ \sum_{n\ge 4} S_n (x) t^n \ = \ xt^3 F(x, t) \, \frac{\partial F}{\partial t}
     (x, t) \, - \, xt^2 F^2 (x, t). $$
Using (\ref{eq:F}), as well as the equation which results from that by
differentiation with respect to $t$, we can rewrite the previous equation as
  $$ \sum_{n \ge 4} 2 S_n (x) t^n \ = \ 2 (1+x) t^2 F(x,t) \, + \, 2 t^2 \, - \,
     3 t F(x, t) \, + \, (t^2 - t^3 - xt^3) \, \frac{\partial F}{\partial t}
     (x, t). $$
Equating the coefficients of $t^n$ in the two sides above, we conclude that
  $$ 2 S_n (x) \ = \ (n - 4) C_{n-1} (x) \, - \, (n - 4) (1+x) C_{n-2} (x) $$
and hence that
  $$ R_n (x) \ = \ (n-2) x C_{n-1} (x) \, + \, \frac{n}{2} C_{n-1} (x) \, + \,
     (\frac{n}{2} - 1) (x-1) C_{n-2} (x). $$
Equation~(\ref{eq:localhDC}) follows from the formula for $h
(\Delta_+ (\Phi_I), x)$, given by the type $D_n$ case of
Lemma~\ref{lem:h+}, and the previous expression for $R_n(x)$ by
straightforward computation. This completes the proof of the
proposition.
\end{proof}

\medskip
\noindent
\begin{proof}[Proof of Theorems~\ref{thm:clocalh} and \ref{thm:clocalg}]
The cases $\xX \in \{A_n, B_n, D_n\}$ are covered by
Propositions~\ref{prop:An}, \ref{prop:Bn} and \ref{prop:Dn}. For
$\xX \in \{F_4, E_6, E_7, E_8\}$ the proposed formulas follow from
(\ref{eq:clocalh}) by explicit computation, based on the formulas
for $h (\Delta_+ (\Phi), x)$ given in \cite[Section~6]{AT06}. It
remains to comment on the cases of types $I_2 (m), H_3$ and $H_4$.

For types $I_2 (m)$ and $H_3$, it follows from the theory of local
$h$-vectors (see parts (c) and (d) of \cite[Example~2.3]{Sta92})
that $\xi_I (\Gamma (\Phi), x) = tx$, where $t$ is the number of
interior vertices of $\Gamma (\Phi)$. We have $t = m-2$ for $\xX =
I_2 (m)$ and $t = 8$ for $\xX = H_3$ (see \cite[Figure~1]{BW08} or
Remark~1 in Section~\ref{sec:rem}) and the proposed formulas
follow. Finally, let $\xX = H_4$. From (\ref{eq:clocalh}),
(\ref{eq:defclocalg}) and the fact that $\xi_0 (\Phi) = \ell_0
(\Phi) = 0$ we get
  $$ \xi_1 (\Phi) x(1+x)^2 + \xi_2 (\Phi) x^2 \ = \ \sum_{J \subseteq I} \
     (-1)^{|I \sm J|} \ h (\Delta_+ (\Phi_J), x). $$
Setting $x=1$ in the previous equality we get
  \begin{equation} \label{eq:xiH4}
    4 \xi_1 (\Phi) + \xi_2 (\Phi) \ = \ \sum_{J \subseteq I} \ (-1)^{|I \sm J|}
    \ N^+ (\Phi_J),
  \end{equation}
where $N^+ (\Psi)$ denotes the number of facets of $\Delta_+
(\Psi)$ (i.e., the number of positive clusters for the root system
$\Psi$). The right-hand side of (\ref{eq:xiH4}) can be easily
computed by hand (it equals 208), using
\cite[Proposition~3.9]{FZ03} and Proposition~\ref{prop:cback} (v).
Since $\xi_1 (\Phi) = \ell_1 (\Phi)$ is equal to the number of
interior vertices of $\Gamma (\Phi)$, we have $\xi_1 (\Phi) = 42$
(see Remark~1 in Section~\ref{sec:rem}). It follows from
(\ref{eq:xiH4}) that $\xi_2 (\Phi) = 40$.
\end{proof}

\medskip
\noindent
\begin{proof}[Proof of Corollary~\ref{cor:clocalg}] Theorem~\ref{thm:clocalg} shows
that the statement holds when $\Phi$ is irreducible. The general
case then follows from Lemmas~\ref{lem:localjoin} and
\ref{lem:cjoin}.
\end{proof}

\section{Proof of Theorem~\ref{thm:bary}}
\label{sec:bproof}

We first review two of the tools from the combinatorics of
permutations which will be used in the proof of
Theorem~\ref{thm:bary}. Throughout this section, we will denote by
$\eE_n$ the set of permutations in $\sS_n$ for which every left to
right maximum is a descent.

\medskip
\noindent
\textbf{Descents and excedances.}
Given a permutation $w \in \sS_n$, we may write $w$ in cycle form so that each cycle
begins with its largest element and the cycles of $w$ are arranged in the increasing
order of their largest elements (this is the standard representation of $w$, discussed
on \cite[p.~17]{StaEC1}). We denote by $\phi(w)$ the sequence (or word) which is
obtained after removing the parentheses from the cycles of $w$, considered as a
permutation in $\sS_n$. For instance, if $n=9$ and $w = (5 \, 2 \, 4)(6 \, 1)(8) (9
\, 7 \, 3)$ in standard cycle form, then $\phi(w) = (5, 2, 4, 6, 1, 8, 9, 7, 3)$ is
the permutation in $\sS_9$ which maps 1 to 5, 2 to itself, 3 to 4 etc. The following
properties hold (recall that $\dD_n$ denotes the set of derangements in $\sS_n$):
  \begin{itemize}
    \item[(a)] the map $\phi: \sS_n \to \sS_n$ is bijective,
    \item[(b)] $\phi (\dD_n) = \eE_n$,
    \item[(c)] for $w \in \sS_n$ and $1 \le i \le n$ we have $w(i) < i$ if and only
    if $i$ is a descent of $\phi(w)$.
  \end{itemize}
We will denote by $\hat{\phi}: \dD_n \to \eE_n$ the bijective map induced by $\phi$
on the set $\dD_n$.

  \medskip
  \noindent
  \textbf{The Foata-Sch\"utzenberger-Strehl action.}
  We will need the following variant of the Foata-Sch\"utzenberger-Strehl action on
  permutations; see, for instance, \cite[Section~V.1]{FSc70} \cite{Fo72, FS74}. Up to
  date expositions and several applications of this construction can be found in
  \cite{Bra08, PRW08}.

  We let $w = (w_1, w_2,\dots, w_n)$ be a permutation in $\eE_n$, where $w_i = w(i)$
  for $1 \le i \le n$, and set $w_0 = 0$ and $w_{n+1} = n+1$. A \emph{double ascent} of
  $w$ is an index $1 \le i \le n$ such that $w_{i-1} < w_i < w_{i+1}$. Given a double
  ascent or a double descent $i$ of $w$, we define the permutation $\psi_i (w) \in
  \sS_n$ as follows: If $i$ is a double ascent of $w$, then $\psi_i (w)$ is the
  permutation obtained from $w$ by moving $w_i$ between $w_j$ and $w_{j+1}$, where $j$
  is the largest index satisfying $1 \le j < i$ and $w_j > w_i > w_{j+1}$ (note that
  such an index exists, since $w \in \eE_n$ and hence $i$ is not a left to right
  maximum of $w$). Similarly, if $i$ is a double descent of $w$, then $\psi_i (w)$ is
  the permutation obtained from $w$ by moving $w_i$ between $w_j$ and $w_{j+1}$, where
  $j$ is the smallest index satisfying $i < j \le n$ and $w_j < w_i < w_{j+1}$ (note
  that such an index exists, since $w_{n+1} = n+1$). For instance, for the example of
  Figure~\ref{fig:perm} we have $\psi_4 (w) = (7, 5, 3, 1, 6, 9, 8, 2, 4)$ and $\psi_7
  (w) = (7, 3, 1, 5, 6, 9, 2, 4, 8)$. Since the values at left to right maxima are
  unchanged when passing from $w$ to $\psi_i (w)$, we have $\psi_i (w) \in \eE_n$ in
  both cases.

  \begin{figure}
  \epsfysize = 2.8 in \centerline{\epsffile{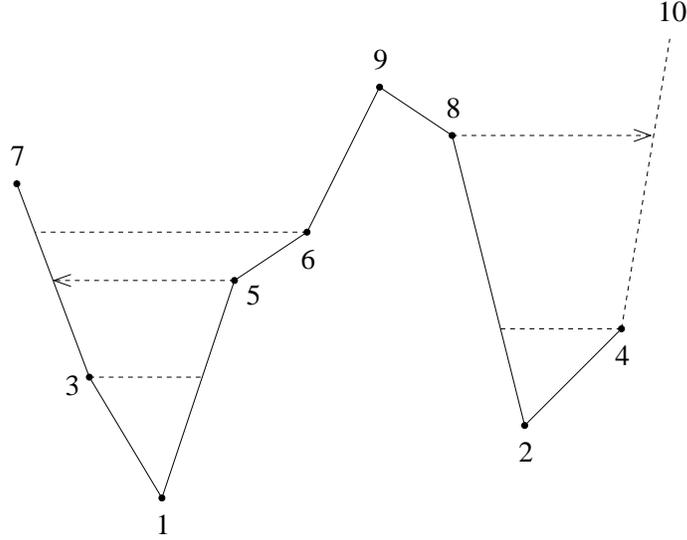}}
  \caption{The permutation $w = (7, 3, 1, 5, 6, 9, 8, 2, 4) \in \eE_9$.}
  \label{fig:perm}
  \end{figure}

  We call two permutations in $\eE_n$ equivalent (under the
  Foata-Sch\"utzenberger-Strehl action on
  $\eE_n$) if one can be obtained by applying a sequence of maps of the form $\psi_i$
  to the other. We leave it to the reader to check that this defines an equivalence
  relation on $\eE_n$ and that each equivalence class contains a unique element
  having no double descent. Moreover, if $w \in \eE_n$ has no double descent and $k$
  double ascents, then the equivalence class $O(w)$ of $w$ has $2^k$ elements and
  exactly $\binom{k}{j}$ of them have $j$ descents more than $w$, so that
    \begin{equation} \label{eq:orbits}
      \sum_{u \in O(w)} \, x^{\des(u)} \ = \ x^{\des(w)} (1+x)^k \ = \ x^{\des(w)}
      (1+x)^{n - 2\des(w)}.
    \end{equation}

  \noindent
    \begin{proof}[Proof of Theorem~\ref{thm:bary}] Starting from
    (\ref{eq:localbary}) we find that
      $$\ell_V (\Gamma, x)  \ = \ \sum_{u \in \dD_n} \, x^{\ex(u)} \ = \ \sum_{u \in
        \dD_n} \, x^{\ex(u^{-1})} \ = \ \sum_{u \in \dD_n} \, x^{n-\ex(u)}
         \ = \ \sum_{u \in \eE_n} \, x^{\des(u)},$$
    where the last equality uses property (c) for the map $\hat{\phi}: \dD_n \to
    \eE_n$. Summing (\ref{eq:orbits}) over all equivalence classes of the
    Foata-Sch\"utzenberger-Strehl action on $\eE_n$ we get
      $$ \sum_{u \in \eE_n} \, x^{\des(u)} \ = \ \sum_{w \in \hat{\eE}_n} \,
         x^{\des(w)} (1+x)^{n - 2\des(w)}, $$
    where $\hat{\eE}_n$ denotes the set of permutations $w \in \eE_n$ with no double
    descent. From the previous equalities and (\ref{eq:defxi}) we conclude that $\xi_i$
    is equal to the number of permutations $w \in \hat{\eE}_n$ with $\des(w) = i$, so
    we have derived interpretation (iii) in the theorem. The latter and property (c),
    applied to the map $\hat{\phi}: \dD_n \to \eE_n$, imply that $\xi_i$ is also equal
    to the number of derangements $w \in \dD_n$ with $n-i$ excedances and no index $j$
    satisfying $w(j) < j < w^{-1} (j)$. Passing to the inverse permutation $w^{-1}$
    leads to interpretation (ii) of the theorem.

    Finally, to check the equality between (i) and (ii), we work with descending
    (instead of ascending) runs. We observe that the map $\hat{\phi}: \dD_n \to \eE_n$
    induces a bijection from the set of derangements $w \in \dD_n$ with no double
    excedance onto the set of permutations in $\sS_n$ with no descending run of length
    one. Moreover, the number of excedances of such $w$ is equal to the number of
    descending runs of $\hat{\phi} (w)$ and the proof follows.
    \end{proof}

\section{Remarks}
\label{sec:rem}

1. It follows from the results of \cite[Section~2]{Sta92} (see
also our discussion in Section~\ref{subsec:enu}) that $\ell_1
(\Phi) = \xi_1 (\Phi)$ is equal to the number of interior vertices
of $\Gamma(\Phi)$. These vertices are exactly the positive roots
of $\Phi$ with support equal to $\Pi$ (i.e., the positive roots
which do not belong to any proper parabolic root subsystem
$\Phi_J$). The number of these roots was computed by Chapoton
\cite{Ch06} and admits an elegant, uniform formula; see
\cite[Proposition~1.1]{Ch06}. It would be interesting to find
uniform interpretations or formulas for $\ell_i (\Phi)$ or $\xi_i
(\Phi)$ for other values of $i$. We are not aware of a simple
closed form expression for $\ell_i (\Phi)$ in the type $A_n$ and
$B_n$ cases.

\medskip
2. It is natural to inquire for a more conceptual proof of
Proposition~\ref{prop:Dn}, in the spirit of those of
Propositions~\ref{prop:An} and \ref{prop:Bn}.

\medskip
3. The unimodality of the derangement polynomials was first proved by
Brenti \cite[Corollary~1]{Bre90}, who also asked for a combinatorial proof 
\cite[p.~1140]{Bre90}. Such a proof was given by Stembridge 
\cite[Corollary~2.2]{Ste92}. Theorem~\ref{thm:bary} provides another 
combinatorial proof (for a stronger statement). Since the barycentric 
subdivision $\sd(2^V)$ is a regular subdivision of $2^V$, the unimodality 
of the derangement polynomials also follows from (\ref{eq:localbary}) 
and \cite[Theorem~5.2]{Sta92}.

\section*{Acknowledgments} The authors wish to thank the anonymous referees for
their helpful comments. The second author was co-financed by the European Union
(European Social Fund - ESF) and Greek national funds through the Operational
Program ``Education and Lifelong Learning" of the National Strategic Reference
Framework (NSRF) - Research Funding Program: Heracleitus II. Investing in
knowledge society through the European Social Fund.

\end{document}